\documentclass{amsart}
\usepackage[british]{babel}
\usepackage{amsthm,amsfonts,enumerate,amscd,url} 

\newcommand{\bP}{\mathbb{P}}

\newcommand{\bx}{\mathbf{x}}
\newcommand{\GG}{\mathbb{G}}
\newcommand{\CC}{\mathbb{C}}
\newcommand{\NN}{\mathbb{N}}

\theoremstyle{plain}
\newtheorem{thm}{Theorem}[section]

\newtheorem{prop}[thm]{Proposition}
\newtheorem{cor}[thm]{Corollary}

\newtheorem{lem}[thm]{Lemma}

\theoremstyle{remark}
\newtheorem{re}[thm]{Remark}
\newtheorem{ex}[thm]{Example}

\newtheorem*{acks}{Acknowledgments}

\theoremstyle{definition}
\newtheorem{de}[thm]{Definition}

\numberwithin{equation}{section}

\newcommand{\abs}[1]{\lvert#1\rvert}
\newcommand{\gen}[1]{\langle#1\rangle}

\DeclareMathOperator{\Tan}{Tan}

\DeclareMathOperator{\sym}{Sym}

\DeclareMathOperator{\im}{Im}

\DeclareMathOperator{\rk}{rk}

\DeclareMathOperator{\Hom}{Hom}

\bibliographystyle{amsalpha}  

\begin{document}

\title{On higher Gauss maps}
\author{Pietro De Poi}
\address{Dipartimento di Matematica e Informatica, Universit\`a degli Studi di Udine, via delle Scienze, 206, 33100 Udine, Italy}
\email{pietro.depoi@uniud.it}
\author{Giovanna Ilardi}
\address{Dipartimento di Matematica e Applicazioni, Universit\`a degli Studi di Napoli ``Federico II'', 80126 Napoli, Italy}
\email{giovanna.ilardi@unina.it}
\thanks{The authors were partially supported by  MIUR,
project ``Geometria delle variet\`a algebriche e dei loro spazi di moduli''}

\subjclass[2010]{Primary 53A20. Secondary  14N15; 51N35; 53B99}
\keywords{Higher Gauss maps, higher fundamental forms, algebraic varieties}
\date{\today}

\begin{abstract}
We prove that the general fibre of the $i$-th Gauss map has dimension $m$ if and only if at the general point the 
$(i+1)$-th fundamental form consists of cones 
with vertex a fixed $\bP^{m-1}$, extending a known theorem for the usual Gauss map. 
We prove this via a recursive formula for expressing 
higher fundamental forms. We also show some consequences of these results. 
\end{abstract}

\maketitle

\section{Introduction}
Let $V\subset \bP^N$ be a projective embedded algebraic variety and $P\in V$ a point of it; to $P$ it is associated the $i$-th 
osculating space $\widetilde {T}^{(i)}_P(V)\subset \bP^N$ generated by the $(i+1)$-th infinitesimal neighbourhood (see e. g. 
\cite[Example II.3.2.5]{H}); let $d_i$ be its dimension. We recall that 
the \emph{$i$-th Gauss map} $\gamma^i\colon V \dasharrow \GG(d_i,N)$ is the rational map which associates to the general point $P$ 
its   $i$-th osculating space $\widetilde {T}^{(i)}_P(V)$. For $i=1$ this reduces to the usual Gauss map. 

The study of the Gauss map has been rather intense, both in classical 
(see for example \cite{Delp}, \cite{B}, \cite{S}) and modern algebraic geometry (see for example \cite{G-H}, \cite{AG}). 
Instead, the study of higher order Gauss map has been much more scarce; as far as we know, 
we can only cite the classical paper of M. Castellani \cite{C}, reviewed in modern form in \cite{F-I}, \cite{L} and the very recent paper 
\cite{dir}. Closely related results have been obtained by R. Piene on higher dual varieties, see for example \cite{RP}. We note 
moreover that a different  
notion of higher Gauss map has been given by F. Zak: see for example \cite{Z}.

The main basic difference between the first and higher Gauss maps is---as shown in \cite{C}, \cite{F-I} and \cite{RP}---that the fibres of 
the Gauss map are (Zariski open subsets of) linear spaces, while for higher Gauss maps this is not true in general.

In this paper, the study of higher Gauss maps is taken up further: in particular, we analyse the infinitesimal behaviours of 
these maps. Surprisingly enough, it turns out, see Theorem \ref{thm:cono}, that the general fibre of the $i$-th Gauss map has dimension 
$m$ if and only if at the general point the $(i+1)$-th fundamental form consists of cones 
with vertex a fixed $\bP^{m-1}$, as it happens for the usual Gauss map: see \cite[(2.6)]{G-H}. 
This result is obtained by virtue of a recursive formula for expressing the $(i+1)$-th fundamental form in terms of the 
 $i$-th fundamental one: see Lemma \ref{lem:jacco}. This last result is interesting in its own right: for example, we can prove 
immediately as a corollary the (well-known) fact that 
Jacobian system of the $(i+1)$-th fundamental form is contained in the $i$-th fundamental form.
Finally, we show some results that follow from the main result of the paper: in particular, as an example of the consequences of 
the theorem, we study the varieties whose image of 
a higher Gauss map has as image a curve, describing completely their varieties of higher osculating spaces. 

\begin{acks}
We would like to thank the 
anonymous referee for valuable suggestions and remarks. 
\end{acks}

\section{Notation and preliminaries}
We use notation as in \cite{G-H}, \cite{H}, and most of the preliminaries are taken from \cite{De Poi- Di G-I}. 
Let $V \subset \bP^N$ be a projective variety of dimension $k$
over $\CC$ that will be always irreducible. For any point $P\in V$ we use the following notation:
$\widetilde {T}_P(V)\subset \bP^N$
is the embedded tangent projective space to $V$ in $P$ and
$T_P(V)$ is the Zariski tangent space.

As in \cite{G-H}, we abuse notation by identifying the embedded tangent space in $\bP^N$ 
with the affine cone over it in $\CC^{N+1}$. With this convention $T_P(V)\cong \frac{\widetilde T_P(V)}{\CC}$.
We denote by $\GG(t,N)$ the Grassmannian of $t$-planes of $\bP^N$.

To study the behaviour of $V$ in $P$, following \cite{G-H} (and references [2], [6], [7] and [10] therein),
we consider the manifold $\mathcal F (V)$ of frames in $V$. An element of  $\mathcal F (V)$ is a \emph{Darboux frame} centred in $P$. 
This means an $(N+1)$-tuple
\begin{equation*}
\left\{A_0;A_1,\ldots,A_k;\dotsc,A_N\right\}
\end{equation*}
which is a basis of $\CC^{N+1}$ such that, if $\pi:\CC^{N+1}\setminus\{0\}\rightarrow \bP^N$ is the canonical projection,
\begin{align*}
&\pi(A_0)=P,&
&\text{ and }&
&\pi(A_0), \pi(A_1),\ldots,\pi(A_k)\text{ span } \widetilde{T}_P(V).
\end{align*}

Let this frame move in  $\mathcal F (V)$;
then we have the following structure equations (in terms of the restrictions to $V$ of 
the Maurer-Cartan $1$-forms $\omega_i$, $\omega_{i,j}$ on $\mathcal F(\bP^N)$)
for the exterior derivatives of this moving frame

\begin{equation}\label{derivate_frame_eq}
\begin{cases}
 \omega_\mu=0 &\forall \mu>k\\
  d A_0=\sum_{i=0}^k \omega_i A_i \\
  d A_i=\sum_{j=0}^N \omega_{i,j}A_j  &i=1,\dotsc,N\\
  d\omega_j= \sum_{h=0}^k \omega_h \wedge \omega_{h,j} & j=0,\dotsc,k\\
    d\omega_{i,j}= \sum_{h=0}^N \omega_{i,h} \wedge \omega_{h,j}  &i=1,\dotsc,N,\ j=0,\dotsc,N.
\end{cases}
\end{equation}

\begin{re} Geometrically, the frame $\{A_i\}$ defines a coordinate simplex in $\bP^N$.
The $1$-forms $\omega_i, \omega_{i,j}$ give the rotation matrix when the coordinate simplex is infinitesimally displaced; in particular, modulo $A_0$,
as $d A_0\in T^*_P(\bP^N)$ (the cotangent space), the $1$-forms $\omega_1,\dotsc,\omega_k$ give a basis for the cotangent space $T^*_P(V)$,
the corresponding $\pi(A_i)=v_i\in T_P(V)$ give a basis for $T_P(V)$ such that $v_i$ is tangent to the line $\overline{A_0A_i}$,
 and $\omega_{k+1} =\cdots =\omega_N=0$ on $T_P(V)$.
\end{re}

We can define now the $t$-th fundamental form and the
$t$-th osculating space at $P\in V$, for $t\geq 2$, see \cite[\S 1.(b) and (d)]{G-H} and \cite[\S 1]{De Poi- Di G-I}: 

\begin{de}
Let $P\in V$, let $t\geq 2$ be an integer and let $I=(i_1,\ldots,i_k)$ be such that $\abs{I}\leq t$.
The  \emph{$t$-th osculating space} to $V$ at $P$ is the subspace $\tilde T^{(t)}_P(V)\subset \bP^N$ spanned by $A_0$ and by all the
derivatives $\dfrac{d^{\abs{I}}A_0}{d v_1^{i1}\dotsm d v_k^{i_k}}$, where $v_1,\dotsc, v_k$ span $T_P(V)$.

If the point $P\in V$ is general, we will put 
\begin{equation*}
d_t:= \dim(\tilde T^{(t)}_P(V)).
\end{equation*}
\end{de}

\begin{re}
Obviously, the ``expected'' dimension of the $t$-th osculating space at a general point $P$ 
for our $k$-dimensional variety $V\in\bP^N$ is the minimum among  
$N$ and $k_t:=\binom{k+t}{k}-1$; indeed, for the ``general'' variety, $d_t$ has this expected dimension, but, 
as the next example shows, there are exceptions.  
\end{re}

\begin{ex}
Let us consider the so called \emph{Togliatti surface} $S\subset\bP^5$, 
which is a particular projection of the del Pezzo sextic surface embedded in $\bP^6$, see \cite{T} and \cite{I};
 it is the closure of the image of the rational map
\begin{align*}
\psi\colon \bP^2 &\dashrightarrow  \bP^5\\
(x:y:z) &\mapsto (x^2y: xy^2: x^2z: xz^2: y^2z: yz^2).
\end{align*}
For the Togliatti surface we have $d_2=4$:  this can be easily seen from the fact that the parametrisation of $S$ given above, 
restricted to the open affine set $x\neq 0$, 
satisfies the following second order partial differential equation  (classically called \emph{Laplace equation}): 
\begin{equation*}
u^2\frac{\partial^2 \bx}{\partial u^2}+uv\frac{\partial^2 \bx}{\partial u\partial v}+v^2\frac{\partial^2 \bx}{\partial v^2}
-2 u\frac{\partial \bx}{\partial u}-2 v\frac{\partial \bx}{\partial v}+2\bx=0,
\end{equation*}
where $u=\frac{y}{x}$, $v=\frac{z}{x}$, are the affine coordinates, etc. 
\end{ex}

\begin{ex}
It is immediate to see that for the Veronese surface  $V_2\subset \bP^5$ we have $\dim \tilde T^{(2)}_P(V_2)=5$ for every point $ P\in V_2$: 
there is not a hyperplane section of $V_2$ with a triple point. 
\end{ex}

\begin{de}
Let $t\geq 2$ and $V_0\subseteq V$ be the quasi projective variety of points where $\tilde T ^{(t)}_P (V)$ has maximal dimension.
The variety 
\begin{equation*}
{\Tan}^t(V) := { \overline{\bigcup_{P \in V_0} \tilde T^{(t)}_P(V)}}
\end{equation*}
  is called  the \emph{variety of  osculating $t$-spaces to $V$}. 
\end{de}

\begin{re}
Obviously we have
\begin{equation*}
d_t\le d_{t-1}+ \binom{k-1+t}{t}\le \dotsm \le \sum_{i=1}^t\binom{k+i-1}{i}= k_t. 
\end{equation*}
\end{re} 
Following \cite{G-H} 
we give 
\begin{de}\label{ff_def}
The \emph{$t-$th fundamental form} of $ V$ in $P$
 is the linear system $\abs{I^t}$ in the projective space $\bP (T_P(V))\cong \bP^{k-1}$ of hypersurfaces of degree $t$ 
defined symbolically by the equations:
\begin{equation*}
d^t A_0=0.
\end{equation*}
More intrinsically, we write $I^t$ as the map
\begin{equation*}
   I^t\colon  \sym^{(t)} T(V) \rightarrow N^t(V)
\end{equation*}
where $N^t(V)$ is the bundle defined locally as $N^t_P(V):=\displaystyle{\frac{\CC^{N+1}}{{\tilde T}^{(t-1)}_P(V)} }$
and the map $I^t$ is defined locally on each $v\in T_P(V)$ as
\begin{equation*}
     v^t \mapsto \displaystyle{\frac{d^t A_0}{d v^t} }\mod {\tilde T}^{(t-1)}_P(V).
\end{equation*}
\end{de}

\begin{ex}
With an explicit calculation (or from Example \ref{ex:3}) we can show that 
the third fundamental form at the general point $P$ of the Togliatti surface $S\subset \bP^5$
is given by a single cubic in $\bP (T_P(V))\cong \bP^1$, while for a ``general'' surface (i.e. if $d_2=5$) of $\bP^5$ 
the third fundamental form is the empty set; this is obviously true for the Veronese surface $V_2\subset\bP^5$. 
\end{ex}

We will denote the dimension of the $t$-th fundamental form by $\Delta_t$:
\begin{equation*}
\Delta_t:=\dim(\abs{I^t}),
\end{equation*}
i.e. $\abs{I^t}\cong \bP^{\Delta_t}$. 

We recall that (see \cite[Corollary 1.15]{De Poi- Di G-I})

\begin{prop}\label{cor_dimensione ff}
We have that 
\begin{equation*}
d_t=  d_{t-1}+\Delta_t+1,
\end{equation*}
and vice versa: if $d_t=  d_{t-1}+\Delta+1$, then the $t$-th fundamental form has dimension $\Delta$. 
\end{prop}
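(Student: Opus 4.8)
The plan is to reduce the whole statement to a single computation of the rank of the linear map $I^t$. First I would observe that, directly from the Definition of the osculating spaces, $\tilde T^{(t)}_P(V)$ is the linear span of $\tilde T^{(t-1)}_P(V)$ together with all the $t$-th order derivatives $\frac{d^t A_0}{dv^t}$, $v\in T_P(V)$. Passing to the quotient $N^t_P(V)=\CC^{N+1}/\tilde T^{(t-1)}_P(V)$, the image of the affine cone over $\tilde T^{(t)}_P(V)$ is therefore exactly the span of these derivatives modulo $\tilde T^{(t-1)}_P(V)$, which by Definition \ref{ff_def} is precisely $\im I^t$. Counting affine dimensions gives
\begin{equation*}
d_t+1=(d_{t-1}+1)+\dim \im I^t,
\end{equation*}
that is $d_t-d_{t-1}=\rk I^t$.

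The second ingredient is to match $\rk I^t$ with the dimension of the linear system $\abs{I^t}$. Here I would dualise: choosing a basis $\ell_1,\dots,\ell_r$ of $\bigl(N^t_P(V)\bigr)^*$, the scalar forms $\ell_j\circ I^t$ are homogeneous polynomials of degree $t$ on $T_P(V)$, i.e. degree-$t$ hypersurfaces in $\bP(T_P(V))\cong\bP^{k-1}$, and by the symbolic description $d^tA_0=0$ they span exactly the linear system $\abs{I^t}$. Since these forms are the images of the $\ell_j$ under the dual map $(I^t)^*$, the number of linearly independent ones is $\rk (I^t)^*=\rk I^t$, so the projectivisation has dimension
\begin{equation*}
\Delta_t=\dim\abs{I^t}=\rk I^t-1.
\end{equation*}

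Combining the two displays yields $d_t=d_{t-1}+\rk I^t=d_{t-1}+\Delta_t+1$, which is the asserted formula. For the converse, the same chain of equalities shows that $\rk I^t$ is determined by the difference $d_t-d_{t-1}$; hence if $d_t=d_{t-1}+\Delta+1$ then $\rk I^t=\Delta+1$, and therefore $\dim\abs{I^t}=\rk I^t-1=\Delta$, as claimed.

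The step I expect to be the main obstacle is the first one: making rigorous the claim that, modulo $\tilde T^{(t-1)}_P(V)$, the span of the $t$-th order derivatives is genuinely all of $\im I^t$ and carries nothing already accounted for in lower order. This requires the structure equations \eqref{derivate_frame_eq} to control how $d^tA_0$ decomposes along the filtration by osculating spaces, together with the hypothesis that $P$ is general so that $d_{t-1}$, $d_t$ and $\rk I^t$ are locally constant; without this generality the quotient bundle $N^t(V)$ need not have constant rank and the fibrewise identification of $\im I^t$ with $\tilde T^{(t)}_P(V)/\tilde T^{(t-1)}_P(V)$ could break down.
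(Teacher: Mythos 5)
Your proof is correct; note that the paper itself gives no internal proof of this proposition, which is simply recalled from \cite[Corollary 1.15]{De Poi- Di G-I}, so there is no argument of the paper to compare against. Your route is the natural self-contained one: you compute $\rk I^t$ twice, once as $\dim \tilde T^{(t)}_P(V)-\dim \tilde T^{(t-1)}_P(V)$ via the quotient $N^t_P(V)$, and once as $\Delta_t+1$ via the dual map $(I^t)^*$, after which both directions of the statement collapse to the single identity $\Delta_t=d_t-d_{t-1}-1$ (indeed the ``vice versa'' is not an independent claim). The only point your closing worry really amounts to is polarization: $\im I^t$ is a priori spanned by the images of the pure powers $v^t$, whereas $\tilde T^{(t)}_P(V)$ is spanned using all mixed derivatives $\frac{d^{\abs{I}}A_0}{dv_1^{i_1}\dotsm dv_k^{i_k}}$ with $\abs{I}\le t$; since we work over $\CC$, the pure $t$-th powers span $\sym^{(t)}T_P(V)$, so the two spans agree modulo $\tilde T^{(t-1)}_P(V)$ and your dimension count $d_t+1=(d_{t-1}+1)+\rk I^t$ is valid, with the genericity of $P$ needed only so that $d_{t-1}$ and $d_t$ are the dimensions fixed in the paper's definition.
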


\begin{ex}\label{ex:3}
For the Togliatti surface $S\subset \bP^5$ we have that $d_3=5$ and $d_2=4$, and therefore $\Delta_3=0$. 

Analogously,  for a ``general'' surface of $\bP^5$, $d_3=d_2=5$, and $\Delta_3=-1$, i.e. $\abs{III}=\emptyset$;  
this is true for example for the Veronese surface $V_2\subset\bP^5$. 
\end{ex}

From now on, we will suppose that our Darboux frame 
\begin{equation}\label{eq:darbu}
\{A_0;A_1,\dotsc,A_k;A_{k+1},\dotsc,A_{d_2};A_{d_2+1},\dotsc,A_{d_s};\dotsc,A_{d_t};\dotsc, A_N\} 
\end{equation}
is such that $A_0,A_1,\dotsc,A_{d_s}$ span ${\tilde T}^{(s)}_P(V)$ for all $ s=1,\dotsc, t$, with $d_1:=k$.

\begin{de}
Let $t\geq 1$. The \emph{$t$-th (projective) Gauss map} is the rational map
\begin{align*}
    \gamma^t\colon &V \dashrightarrow \GG(d_t,\bP^N)\\
    & P \mapsto {\tilde T}^{(t)}_P(V).
\end{align*}
\end{de}

We recall that (see for example \cite[Theorem 1.18]{De Poi- Di G-I})
\begin{thm}
The first differential of $\gamma^t$ at $P$ is the $(t+1)$-th fundamental form at $P$.
\end{thm}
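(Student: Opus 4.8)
The plan is to compute the differential of $\gamma^t$ directly from the moving frame and to identify it, through the filtration by osculating spaces, with the map $I^{t+1}$ of Definition \ref{ff_def}. First I would recall the standard description of the tangent space to the Grassmannian: at the point $\gamma^t(P)=[\tilde T^{(t)}_P(V)]\in\GG(d_t,\bP^N)$, regarding $\tilde T^{(t)}_P(V)$ as the $(d_t+1)$-dimensional linear subspace $W\subset\CC^{N+1}$ it cones over, one has
\begin{equation*}
T_{\gamma^t(P)}\GG(d_t,\bP^N)\cong\Hom\bigl(W,\CC^{N+1}/W\bigr)=\Hom\bigl(\tilde T^{(t)}_P(V),N^{t+1}_P(V)\bigr).
\end{equation*}
For a tangent vector $v\in T_P(V)$, choosing a curve in $V$ through $P$ with velocity $v$ and moving the adapted Darboux frame \eqref{eq:darbu} along it, the differential is given on the spanning vectors by
\begin{equation*}
d\gamma^t_P(v)\colon A_\alpha\longmapsto \frac{dA_\alpha}{dv}\bmod \tilde T^{(t)}_P(V),\qquad \alpha=0,\dotsc,d_t.
\end{equation*}

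The key structural input is the derivation property of the osculating filtration: if $A_\alpha\in\tilde T^{(s)}_P(V)$, that is $\alpha\le d_s$, then $dA_\alpha\in\tilde T^{(s+1)}_P(V)$. This follows from the structure equations \eqref{derivate_frame_eq} together with the fact that, in the adapted frame, each $A_\alpha$ with $\alpha\le d_s$ is a combination of $A_0$ and its derivatives of order $\le s$, so one further differentiation produces only derivatives of order $\le s+1$, which by definition span $\tilde T^{(s+1)}_P(V)$. Applying this with $s=t-1$ gives $d\gamma^t_P(v)(A_\alpha)=0$ for all $\alpha\le d_{t-1}$, so $d\gamma^t_P(v)$ vanishes on $\tilde T^{(t-1)}_P(V)$ and descends to a map defined on the quotient $\tilde T^{(t)}_P(V)/\tilde T^{(t-1)}_P(V)=\im I^t$; applying it with $s=t$ shows that its image lies in $\tilde T^{(t+1)}_P(V)/\tilde T^{(t)}_P(V)=\im I^{t+1}\subset N^{t+1}_P(V)$.

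It then remains to match the induced map with $I^{t+1}$. Writing an arbitrary element of $\im I^t$ as $I^t(u)=\dfrac{d^tA_0}{du}\bmod\tilde T^{(t-1)}_P(V)$ for $u\in\sym^{(t)}T_P(V)$, I would lift it to the representative $\frac{d^tA_0}{du}\in\tilde T^{(t)}_P(V)$ and compute, using commutation of the higher symbolic derivatives of $A_0$,
\begin{equation*}
d\gamma^t_P(v)\bigl(I^t(u)\bigr)=\frac{d}{dv}\frac{d^tA_0}{du}\bmod\tilde T^{(t)}_P(V)=\frac{d^{t+1}A_0}{dv\,du}\bmod\tilde T^{(t)}_P(V)=I^{t+1}(v\cdot u).
\end{equation*}
Independence of the chosen lift is exactly the vanishing on $\tilde T^{(t-1)}_P(V)$ established above, and the resulting assignment $v\otimes u\mapsto I^{t+1}(v\cdot u)$ reproduces the symmetric form $I^{t+1}$ of Definition \ref{ff_def}. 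This identifies $d\gamma^t_P$ with the $(t+1)$-th fundamental form.

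The main obstacle I anticipate is the careful bookkeeping in the moving frame: one must verify the derivation property of the filtration directly from the structure equations \eqref{derivate_frame_eq}, controlling which coefficients $\omega_{i,j}$ are forced to vanish between frame vectors of a fixed osculating level, and check that $\frac{d^{t+1}A_0}{dv\,du}$ is genuinely symmetric modulo $\tilde T^{(t)}_P(V)$, so that the differential is well defined as a symmetric multilinear form and not merely as a linear map on the quotient.
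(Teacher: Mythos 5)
Your proof is correct and follows essentially the same route as the paper's: both identify $T_{\gamma^t(P)}\GG(d_t,\bP^N)$ with $\Hom(\tilde T^{(t)}_P V, N^{t+1}_P(V))$ and compute $d\gamma^t_P$ by differentiating the adapted Darboux frame, concluding that the induced map $A_\mu\mapsto \frac{dA_\mu}{dv}\bmod \tilde T^{(t)}_P(V)$ is exactly the $(t+1)$-th fundamental form. The only differences are presentational: the paper obtains this formula via the Pl\"ucker expression $\gamma^t(P)=A_0\wedge\dotsb\wedge A_{d_t}$ and the Maurer--Cartan coefficients $\omega_{\mu,j}(A_\alpha)$, whereas you work directly with the quotient description of the Grassmannian tangent space and make explicit the filtration property (differentiation sends $\tilde T^{(s)}_P(V)$ into $\tilde T^{(s+1)}_P(V)$, so the map kills $\tilde T^{(t-1)}_P(V)$), which the paper leaves implicit.
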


The idea of the proof is the following: 
we have, by the definition of $\gamma^t$, that
\begin{equation*}
    d\gamma^t_P\colon T_P V \dashrightarrow T_{\tilde T^{(t)}_P V} \GG(d_t,\bP^N),
\end{equation*}
and we recall that  $T_{\tilde T^{(t)}_P V} \GG(d_t,\bP^N)\cong \Hom(\tilde T^{(t)}_P V, N^{t+1}_P(V))$; moreover
if we choose a Darboux frame as in \eqref{eq:darbu}, we have that $d A_0\in \tilde T_P V \subset \tilde T^{(t)}_P V$ and
\begin{equation*}
\frac{\tilde T^{(t)}_P V}{\CC A_0}=T^{(t)}_P V
\end{equation*}
and therefore  $d\gamma^t_P\in \Hom (T_P V\otimes T^{(t)}_P V,  N^{t+1}_P(V)) $.

Now, we remark that, in our Darboux frame, we can interpret $\gamma^t$ as
\begin{equation*}
 \gamma^t(P)=A_0\wedge\dotsb \wedge A_{d_t},
\end{equation*}
and therefore by \eqref{derivate_frame_eq},
\begin{align*}
 d\gamma^t_P&\equiv\sum_{\substack{
1\le i \le d_t\\
d_t+1\le j \le N
}}(-1)^{d_t-i+1} \omega _{i,j} A_0\wedge\dotsb\wedge \hat {A_i}\wedge\dotsb \wedge A_{d_t}\wedge A_{j}, & \mod \tilde T^{(t)}_P V;
\end{align*}
now, a basis for $T_P V\otimes T^{(t)}_P V$ can be expressed by $(A_\alpha\otimes A_\mu)_{\substack{\alpha=1,\dotsc,k\\ \mu=1,\dotsc, d_t}}$,
and
\begin{align*}
 d\gamma^t_P(A_\alpha\otimes A_\mu) &=\sum_{
d_t+1\le j \le N} \omega _{\mu,j}(A_\alpha) A_{j}\in  N^{t+1}_P(V)
\end{align*}
on the other hand, for the $(t+1)$-th fundamental form we have
\begin{align*}
\frac{d A_\mu}{d v_\alpha} &\equiv \sum_{\substack{
d_t+1\le j \le N
}} \omega _{\mu,j}(A_\alpha) A_{j} &  \mod \tilde T^{(t)}_P(V).
\end{align*}



\section{Fibres of higher Gauss maps}

We start by writing higher fundamental forms explicitly: 
choose a Darboux frame 
\begin{equation*}
\{A_0;A_1,\dotsc,A_k;A_{k+1},\dotsc,A_{d_2};A_{d_2+1},\dotsc,A_{d_s};\dotsc,A_{d_t};\dotsc, A_N\} 
\end{equation*}
such that $A_0,A_1,\dotsc,A_{d_s}$ span ${\tilde T}^{(s)}_P(V)$ for all $ s=1,\dotsc, t$, with $d_1:=k$, 
${\tilde T}^{(1)}_P(V)={\tilde T}_P(V)$ (and $d_0:=0$).
We use indices $1\le i^{(1)}, i_1^{(1)}, \dotsc, i_h^{(1)}\le k=d_1$,  $d_{s-1}+1\le i^{(s)}\le d_s$ for $s=2,\dotsc t$ 
and $d_t+1\le i^{(t+1)}\le N$.

With these notations, the second fundamental form is given by the quadrics
\begin{equation}\label{eq:secondo}
V^{(2)}_{i^{(\ell)}}=\sum_{i_1^{(1)},i_2^{(1)}}q_{i_1^{(1)},i_2^{(1)};i^{(\ell)}}\omega_{i_1^{(1)}} \omega_{i_2^{(1)}},
\end{equation}
with $\ell>k$, where $q_{i_1^{(1)},i_2^{(1)};i^{(\ell)}}(=q_{i_2^{(1)},i_1^{(1)};i^{(\ell)}})$ are defined by 
\begin{equation}\label{eq:secondobis}
\omega_{i_1^{(1)}, i^{(\ell)}}=\sum_{i_2^{(1)}}q_{i_1^{(1)},i_2^{(1)};i^{(\ell)}} \omega_{i_2^{(1)}},
\end{equation}
which are obtained via the Cartan Lemma from 
\begin{equation*}
0=d\omega_{i^{(\ell)}}=\sum_{i_1^{(1)}}\omega_{i_1^{(1)}}\wedge \omega_{i_1^{(1)}, i^{(\ell)}},
\end{equation*}
since $\omega_{i^{(\ell)}}=0$ on ${\tilde T}_P(V)$. 

The higher fundamental forms can be expressed as in the following 

\begin{lem}\label{lem:jacco}
The  $(s+1)$-th fundamental form is given by the  polynomials 
\begin{equation}\label{eq:cacco}
V^{(s+1)}_{i^{(\ell)}}=\sum_{i_1^{(1)}, \dotsc, i_{s+1}^{(1)}=1}^kq_{i_1^{(1)}, \dotsc, i_{s+1}^{(1)};i^{(\ell)}}\omega_{i_1^{(1)}}\dotsm \omega_{i_{s+1}^{(1)}},
\end{equation}
with $\ell\ge s+1$, which inductively satisfy the relations
\begin{equation}\label{eq:relazioni}
\sum_{i^{(s)}=d_{s-1}+1}^{d_s}q_{i_1^{(1)}, \dotsc, i_s^{(1)};i^{(s)}} \omega_{i^{(s)}, i^{(\ell)}}
=\sum_{i_{s+1}^{(1)}=1}^kq_{i_1^{(1)}, \dotsc, i_{s+1}^{(1)};i^{(\ell)}} \omega_{i_{s+1}^{(1)}},
\end{equation}
where $q_{i_1^{(1)}, \dotsc, i_s^{(1)};i^{(s)}}$ are the coefficients of the $s$-th fundamental form 
(and with the natural symmetries of the indices: 
$q_{\dotsc, i_j^{(1)}, \dotsc, i_k^{(1)}\dotsc;i^{(\ell)}}=q_{\dotsc,i_k^{(1)}, \dotsc, i_j^{(1)},\dotsc;i^{(\ell)}}$), and the basis of the induction is 
the second fundamental form (i.e. $s=1$) for which relations \eqref{eq:cacco} and \eqref{eq:relazioni} are to be read as,  
respectively, \eqref{eq:secondo} and \eqref{eq:secondobis}.
 \end{lem}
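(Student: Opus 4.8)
The plan is to prove the lemma by induction on $s$, using the structure equations \eqref{derivate_frame_eq} together with the Cartan Lemma at each step, exactly as the base case $s=1$ is handled in \eqref{eq:secondo}--\eqref{eq:secondobis}. The inductive hypothesis is that the $s$-th fundamental form is given by \eqref{eq:cacco} (with $s$ in place of $s+1$), with symmetric coefficients $q_{i_1^{(1)},\dotsc,i_s^{(1)};i^{(\ell)}}$ defined for all $\ell\ge s$, and that these coefficients satisfy the defining relation \eqref{eq:relazioni} with $s-1$ in place of $s$. Recall that the $s$-th fundamental form $I^s$ sends $v^s\mapsto \frac{d^s A_0}{dv^s}\bmod \tilde T^{(s-1)}_P(V)$, so its coefficients record the $A_{i^{(\ell)}}$-components ($\ell\ge s$) of the $s$-fold symmetric derivative of $A_0$; the next fundamental form is obtained by differentiating once more and reducing modulo $\tilde T^{(s)}_P(V)$.

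\smallskip

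First I would fix the defining relation for the coefficients of the $(s+1)$-th form. The key identity comes from differentiating the equation \eqref{eq:relazioni} that defines the $s$-th form, or equivalently from applying $d$ to the expression $\frac{d^s A_0}{dv^{s}}$ and collecting the component along $A_{i^{(\ell)}}$ for $\ell\ge s+1$. Concretely, the coefficient $q_{i_1^{(1)},\dotsc,i_s^{(1)};i^{(s)}}$ pairs the $s$-th symmetric derivative against the frame vector $A_{i^{(s)}}$ with $d_{s-1}+1\le i^{(s)}\le d_s$; differentiating $A_{i^{(s)}}$ via $dA_{i^{(s)}}=\sum_j \omega_{i^{(s)},j}A_j$ and projecting modulo $\tilde T^{(s)}_P(V)$ produces the one-form $\sum_{i^{(s)}} q_{i_1^{(1)},\dotsc,i_s^{(1)};i^{(s)}}\,\omega_{i^{(s)},i^{(\ell)}}$ on the left-hand side of \eqref{eq:relazioni}. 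The content of the inductive step is then that this one-form, which a priori is an arbitrary one-form with values in $N^{s+1}_P(V)$, is in fact a linear combination $\sum_{i_{s+1}^{(1)}} q_{i_1^{(1)},\dotsc,i_{s+1}^{(1)};i^{(\ell)}}\,\omega_{i_{s+1}^{(1)}}$ of the basic cotangent forms $\omega_1,\dotsc,\omega_k$, which is precisely what defines the new coefficients and hence the $(s+1)$-th form \eqref{eq:cacco}.

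\smallskip

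The mechanism that forces this linear dependence on $\omega_1,\dotsc,\omega_k$ is the Cartan Lemma, applied to the exterior derivative of the defining relation for the $s$-th form. I would take $d$ of \eqref{eq:relazioni} (at level $s-1$), use the structure equations $d\omega_{i,j}=\sum_h \omega_{i,h}\wedge\omega_{h,j}$ and $d\omega_j=\sum_h \omega_h\wedge\omega_{h,j}$ to expand, and use the inductive hypothesis together with $\omega_\mu=0$ for $\mu>k$ on $V$ to cancel all terms except a wedge product of the left-hand one-form of \eqref{eq:relazioni} against $\sum_{i^{(1)}}\omega_{i^{(1)}}$-type factors. This produces a vanishing expression of the form $\sum_{i^{(1)}}\omega_{i^{(1)}}\wedge\theta_{i^{(1)}}=0$ for suitable one-forms $\theta_{i^{(1)}}$; Cartan's Lemma then yields the symmetric expansion $\theta_{i^{(1)}}=\sum_{i_{s+1}^{(1)}} q_{\dots}\,\omega_{i_{s+1}^{(1)}}$ with the symmetry $q_{\dots,i_j,\dots,i_k,\dots}=q_{\dots,i_k,\dots,i_j,\dots}$ in the last two indices. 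Symmetry in all the indices then follows from symmetry in the first $s$ indices (inductive hypothesis) combined with this new symmetry and the fact that mixed partials commute.

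\smallskip

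The main obstacle I anticipate is bookkeeping: correctly identifying, after expanding $d$ of \eqref{eq:relazioni}, exactly which terms survive the reduction modulo $\tilde T^{(s)}_P(V)$ and regrouping them into the single wedge expression to which Cartan's Lemma applies. In particular one must track the range conventions on the indices $i^{(s)}$ (so that $\omega_{i^{(s)},j}$ with $j\le d_s$ lies in the osculating space and drops out) and verify that the cross terms arising from differentiating the $q$-coefficients themselves recombine symmetrically rather than introducing spurious antisymmetric pieces. Once the surviving terms are isolated, the application of Cartan's Lemma and the verification of the symmetries are formal, and setting $s=1$ recovers \eqref{eq:secondo}--\eqref{eq:secondobis}, anchoring the induction.
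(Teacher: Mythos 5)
Your overall skeleton --- induction anchored at the second fundamental form, exterior differentiation via the structure equations, then Cartan's Lemma --- is the same as the paper's, and you correctly identify the identity one must reach before applying Cartan, namely
\begin{equation*}
\sum_{i_s^{(1)}=1}^{k}\omega_{i_s^{(1)}}\wedge\Bigl(\sum_{i^{(s)}=d_{s-1}+1}^{d_s}q_{i_1^{(1)},\dotsc,i_s^{(1)};i^{(s)}}\,\omega_{i^{(s)},i^{(\ell)}}\Bigr)=0 .
\end{equation*}
However, your derivation of this identity has a genuine gap: the ingredient that actually makes the unwanted terms disappear is never stated or proved. The paper's first step is to observe that, because the frame is adapted to the \emph{whole} osculating flag, $dA_{i^{(s-1)}}\equiv 0 \bmod \tilde T^{(s)}_P(V)$ (its \eqref{eq:dsup1}), whence $\omega_{i^{(s-1)},i^{(\ell)}}=0$ for all $\ell\ge s+1$ (its \eqref{eq:omega1}), and this at every level: $\omega_{i^{(a)},i^{(b)}}=0$ whenever $b\ge a+2$. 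These vanishings are exactly what collapse the expansion $d\omega_{i^{(s-1)},i^{(\ell)}}=\sum_{h=0}^{N}\omega_{i^{(s-1)},h}\wedge\omega_{h,i^{(\ell)}}$ to the single middle range $d_{s-1}+1\le h\le d_s$: the terms with $1\le h\le d_{s-1}$ die because $\omega_{h,i^{(\ell)}}=0$ (the same vanishing one level down), and the terms with $h>d_s$ die because $\omega_{i^{(s-1)},h}=0$. The only cancellation tool you invoke, $\omega_\mu=0$ for $\mu>k$, kills nothing but the single term $h=0$, and the inductive hypothesis as you state it (formulas \eqref{eq:cacco} and \eqref{eq:relazioni} one level down) does not contain these relations either. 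Without them the expansion simply does not reduce to an expression of the form $\sum_{i^{(1)}}\omega_{i^{(1)}}\wedge\theta_{i^{(1)}}=0$.

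The same omission is responsible for the obstacle you flag but leave unresolved, the ``cross terms arising from differentiating the $q$-coefficients''. This is where your route (apply $d$ to the relation \eqref{eq:relazioni} at level $s-1$) deviates from the paper's, and why the paper's is cleaner: the paper never differentiates an identity containing the $q$'s. It differentiates the $q$-free identities $\omega_{i^{(s-1)},i^{(\ell)}}=0$, only afterwards takes a linear combination of the resulting $2$-form identities with coefficients $q_{i_1^{(1)},\dotsc,i_{s-1}^{(1)};i^{(s-1)}}$ (multiplying an identically vanishing $2$-form by a function produces no Leibniz terms), and then substitutes the inductive relation \eqref{eq:s-1} inside the wedge. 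If one insists on differentiating \eqref{eq:relazioni} itself, the argument can be repaired, but only with the flag vanishings in hand: take the relation at target level $\ell\ge s+1$, where both sides vanish identically --- the left by \eqref{eq:omega1}, the right because $q_{i_1^{(1)},\dotsc,i_s^{(1)};i^{(\ell)}}\equiv 0$ for $\ell\ge s+1$, another fact your sketch needs but does not record. Then every $dq\wedge\omega$ term is a function times an identically zero $1$-form and drops out; the cross terms do not ``recombine symmetrically'', they are simply absent. So the missing idea is precisely \eqref{eq:dsup1}--\eqref{eq:omega1}; once it is added, your computation closes and coincides with the paper's proof.
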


\begin{proof}
By the definition of  ${\tilde T}^{(s)}_P(V)$, where $s\in\{1,\dotsc, t-1\}$, we have that 
\begin{equation}\label{eq:dsup1}
d A_{i^{(s-1)}} \equiv 0 \mod  \tilde T^{(s)}_P(V)
\end{equation}
and from  \eqref{derivate_frame_eq} we can write 
\begin{equation*}
d A_{i^{(s-1)}}= \sum_{j=d_{s-1}+1}^N\omega_{i^{(s-1)},j}A_j 
\end{equation*}
from which we deduce, by \eqref{eq:dsup1}
\begin{equation}\label{eq:omega1}
\omega_{i^{(s-1)}, i^{(s+1)}}=\omega_{i^{(s-1)}, i^{(s+2)}}=\dotsb= \omega_{i^{(s-1)}, i^{(t)}}=\omega_{i^{(s-1)}, i^{(t+1)}}=0.
\end{equation}
These equations imply that 
\begin{align*}
0=d\omega_{i^{(s-1)}, i^{(\ell)}}&=\sum_{i^{(s)}}\omega_{i^{(s-1)}, i^{(s)}}\wedge \omega_{i^{(s)}, i^{(\ell)}}& \ell&\ge s+1;
\end{align*}
now, by the inductive hypothesis 
, we have
\begin{equation}\label{eq:s-1}
\sum_{i^{(s-1)}}q_{i_1^{(1)}, \dotsc, i_{s-1}^{(1)};i^{(s-1)}}\omega_{i^{(s-1)}, i^{(\ell)}}=\sum_{i_s^{(1)}}q_{i_1^{(1)}, \dotsc, i_s^{(1)};i^{(\ell)}}\omega_{i_s^{(1)}}
\end{equation}
where $q_{i_1^{(1)}, \dotsc, i_{s-1}^{(1)};i^{(s-1)}}$ are the coefficients of the $(s-1)$-th fundamental form and $q_{i_1^{(1)}, \dotsc, i_s^{(1)};i^{(\ell)}}$ are 
those of the $s$-th fundamental form (with the convention that $q_{i_1^{(1)}, \dotsc, i_s^{(1)};i^{(\ell)}}=1$ if $s\le 1$, and if $s=2$, 
\eqref{eq:s-1} is just \eqref{eq:secondobis} 
i.e. Cartan lemma used to obtain the second fundamental 
form \eqref{eq:secondo}) 
and then, applying relations \eqref{eq:s-1} to (a linear combination of) \eqref{eq:omega1}, we deduce 
\begin{equation*}
\sum_{i^{(s-1)},i^{(s)} }q_{i_1^{(1)}, \dotsc, i_{s-1}^{(1)};i^{(s-1)}}\omega_{i^{(s-1)}, i^{(s)}}\wedge \omega_{i^{(s)}, i^{(\ell)}}
=\sum_{i_s^{(1)},i^{(s)} }q_{i_1^{(1)}, \dotsc, i_s^{(1)};i^{(s)}}\omega_{i_s^{(1)}}\wedge \omega_{i^{(s)}, i^{(\ell)}}
\end{equation*}
then, by Cartan lemma
\begin{equation*}
\sum_{i^{(s)}}q_{i_1^{(1)}, \dotsc, i_s^{(1)};i^{(s)}} \omega_{i^{(s)}, i^{(\ell)}}=\sum_{i_{s+1}^{(1)}}q_{i_1^{(1)}, \dotsc, i_{s+1}^{(1)};i^{(\ell)}} \omega_{i_{s+1}^{(1)}};
\end{equation*}
and the $(s+1)$-th fundamental form is 
\begin{equation*}
V^{(s+1)}_{i^{(\ell)}}=\sum_{i_1^{(1)}, \dotsc, i_{s+1}^{(1)}}q_{i_1^{(1)}, \dotsc, i_{s+1}^{(1)};i^{(\ell)}}\omega_{i_1^{(1)}}\dotsm \omega_{i_{s+1}^{(1)}}.
\end{equation*}
\end{proof}

\begin{re}
Alternatively, we can write---for example writing back relations \eqref{eq:relazioni} in \eqref{eq:cacco} or by a direct computation
\begin{equation*}
V^{(s+1)}_{i^{(\ell)}}=\sum_{i^{(1)}, \dotsc, i^{(s)}}\omega_{i^{(1)}}\omega_{i^{(1)}, i^{(2)}}  \omega_{i^{(1)}, i^{(3)}}\dotsm
\omega_{i^{(s-1)}, i^{(s)}} \omega_{i^{(s)}, i^{(\ell)}},
\end{equation*}
with $\ell\ge s+1$ and the sum varies, as above, for $1\le i^{(1)}\le k=d_1,\dotsc, d_{s-1}+1\le i^{(s)}\le d_s$. 
\end{re}
We recall 

\begin{de}
Let $\Sigma$  be the linear system of dimension $d$ of hypersurfaces of degree $n>1$ in  $\bP^N$ ($N>1$), generated
by the $d+1$ hypersurfaces $f_0=0, \dotsc, f_d=0$. The \emph{Jacobian system} $J(\Sigma)$ of $\Sigma$  is the linear system defined by
the partial derivatives of the forms $f_0, \dotsc, f_d$:
\begin{align*}
J(\Sigma)&:=(\partial f_i/\partial x_j) &i&=0, \dotsc,d;\ j=0,\dotsc,r.
\end{align*}
\end{de}

\begin{re}
Obviously, the Jacobian system $J(\Sigma)$ does not 
depend on the choice of $f_0, \dotsc, f_d$, but only on $\Sigma$. 
\end{re}

\begin{cor}\label{cor:jac}
The $(s+1)$-th fundamental  form  is a linear system of polynomials of degree $s+1$ 
whose Jacobian system is contained in the  $s$-th fundamental  form. 
\end{cor}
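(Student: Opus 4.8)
The plan is to read off the Jacobian system directly from the explicit formula for the higher fundamental forms in Lemma~\ref{lem:jacco} and compare it term-by-term with the formula one level down. Recall that by \eqref{eq:cacco} the $(s+1)$-th fundamental form consists of the degree-$(s+1)$ forms
\begin{equation*}
V^{(s+1)}_{i^{(\ell)}}=\sum_{i_1^{(1)},\dotsc,i_{s+1}^{(1)}=1}^k q_{i_1^{(1)},\dotsc,i_{s+1}^{(1)};i^{(\ell)}}\,\omega_{i_1^{(1)}}\dotsm\omega_{i_{s+1}^{(1)}},
\end{equation*}
where the variables of $\bP(T_P(V))\cong\bP^{k-1}$ are the $\omega_{i^{(1)}}$, $i^{(1)}=1,\dotsc,k$. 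The key structural fact, from the symmetry $q_{\dotsc,i_j^{(1)},\dotsc,i_m^{(1)},\dotsc;i^{(\ell)}}=q_{\dotsc,i_m^{(1)},\dotsc,i_j^{(1)},\dotsc;i^{(\ell)}}$ of the coefficients in the lower indices, is that these $q$'s are genuinely the symmetric coefficients of a degree-$(s+1)$ form, so that differentiation behaves as for ordinary polynomials.

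First I would compute a generic partial derivative $\partial V^{(s+1)}_{i^{(\ell)}}/\partial\omega_{a}$ for a fixed $a\in\{1,\dotsc,k\}$. By the symmetry of the coefficients this derivative equals, up to the multiplicative constant $s+1$,
\begin{equation*}
\frac{1}{s+1}\frac{\partial V^{(s+1)}_{i^{(\ell)}}}{\partial\omega_a}
=\sum_{i_1^{(1)},\dotsc,i_s^{(1)}=1}^k q_{i_1^{(1)},\dotsc,i_s^{(1)},a;i^{(\ell)}}\,\omega_{i_1^{(1)}}\dotsm\omega_{i_s^{(1)}},
\end{equation*}
a form of degree $s$ in the $\omega_{i^{(1)}}$. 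The statement to be proved is that each such degree-$s$ form lies in the $s$-th fundamental form, i.e.\ in the linear span of the $V^{(s)}_{i^{(m)}}$ as $m$ ranges over the admissible superscript indices. This is where the recursion \eqref{eq:relazioni} enters: that relation expresses $\sum_{i_{s+1}^{(1)}} q_{i_1^{(1)},\dotsc,i_{s+1}^{(1)};i^{(\ell)}}\,\omega_{i_{s+1}^{(1)}}$ as the $\mathbf{1}$-form $\sum_{i^{(s)}} q_{i_1^{(1)},\dotsc,i_s^{(1)};i^{(s)}}\,\omega_{i^{(s)},i^{(\ell)}}$, and the right-hand coefficients $q_{i_1^{(1)},\dotsc,i_s^{(1)};i^{(s)}}$ are precisely the coefficients of the $s$-th fundamental form $V^{(s)}_{i^{(s)}}$. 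The plan is therefore to substitute \eqref{eq:relazioni} into the partial-derivative expression, so that the derivative gets rewritten as a linear combination $\sum_{i^{(s)}} c_{i^{(s)}}\,V^{(s)}_{i^{(s)}}$ with coefficients $c_{i^{(s)}}$ coming from the contraction with the $\mathbf{1}$-forms $\omega_{i^{(s)},i^{(\ell)}}$; reading off these coefficients exhibits the partial derivative as a member of the linear system $\lvert I^s\rvert$.

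The main obstacle, and the point needing care, is the bookkeeping of the index ranges and the symmetrization: the recursion \eqref{eq:relazioni} naturally singles out one lower index (the summed $i_{s+1}^{(1)}$), whereas the partial derivative singles out the fixed index $a$; matching these requires using the full symmetry of the $q$'s to move $a$ into the distinguished slot, and then checking that after the substitution the terms reorganize into exactly the $V^{(s)}_{i^{(s)}}$ of \eqref{eq:cacco} one level down without leftover pieces. I would handle this by fixing the multi-index $(i_1^{(1)},\dotsc,i_s^{(1)})$ and tracking a single monomial, verifying the identity coefficient-wise; since $J(\Sigma)$ is generated by all partials $\partial V^{(s+1)}_{i^{(\ell)}}/\partial\omega_a$ as $i^{(\ell)}$ and $a$ vary, showing that each such partial lies in $\lvert I^s\rvert$ suffices, and the Remark's alternative closed form $V^{(s+1)}_{i^{(\ell)}}=\sum\omega_{i^{(1)}}\omega_{i^{(1)},i^{(2)}}\dotsm\omega_{i^{(s)},i^{(\ell)}}$ provides a convenient cross-check that differentiating in an $\omega_{i^{(1)}}$ strips off exactly one factor and lands in the span describing $V^{(s)}$.
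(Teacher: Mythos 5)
Your plan is correct and is essentially the paper's own argument: the paper's proof consists of exactly this computation, differentiating the expression of $V^{(s+1)}_{i^{(\ell)}}$ from Lemma~\ref{lem:jacco} after substituting the recursion \eqref{eq:relazioni}, so that each partial derivative appears as $\sum_{i^{(s)}} c_{i^{(s)}}\, V^{(s)}_{i^{(s)}}$ with constant coefficients $c_{i^{(s)}}=\partial\omega_{i^{(s)},i^{(\ell)}}/\partial\omega_{i_j^{(1)}}$, hence lies in the $s$-th fundamental form. Your explicit coefficient-matching (moving the differentiated index $a$ into the distinguished slot via the symmetry of the $q$'s and then reading \eqref{eq:relazioni} coefficient-wise) is precisely the bookkeeping the paper compresses into its single displayed equation.
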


\begin{proof}
We have, by Lemma \ref{lem:jacco}
\begin{equation}\label{eq:abel}
\frac{\partial V^{(s+1)}_{i^{(\ell)}}}{\partial \omega_{i_j^{(1)}}}=\sum_{i_1^{(1)}, \dotsc, i_{s}^{(1)}, i^{(s)}}q_{i_1^{(1)}, \dotsc, i_s^{(1)};i^{(\ell)}}\omega_{i_1^{(1)}}\dotsm \omega_{i_{s}^{(1)}}\frac{\partial \omega_{i^{(s)}, i^{(\ell)}}}{\partial \omega_{i_j^{(1)}}}.
\end{equation}

\end{proof}

We are now able to prove the following fact (see \cite[(2.6)]{G-H})

\begin{thm}\label{thm:cono}
The $s$-th Gauss map $\gamma^s$ ($s\ge 1$) has fibres of dimension $m$ if and only if at a general point $P\in V$ the forms of degree $s+1$ of the $(s+1)$-th 
fundamental form $\abs{I^{s+1}}$ are cones over a $\bP^{m-1}\subset \bP T_P(V)$.
\end{thm}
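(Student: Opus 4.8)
The plan is to convert the statement about the dimension of the general fibre into an infinitesimal statement about $\ker(d\gamma^s_P)$, and then to identify that kernel with the common vertex of $\abs{I^{s+1}}$ by means of the recursion \eqref{eq:relazioni} of Lemma \ref{lem:jacco}.

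First I would reduce to an infinitesimal computation. At a general point $P$ the map $\gamma^s$ is a morphism of locally constant rank, so by generic smoothness its general fibre is smooth of dimension $\dim_{\CC}\ker(d\gamma^s_P)$. Using the theorem recalled in \S2 that the first differential of $\gamma^s$ is the $(s+1)$-th fundamental form, together with the wedge expression $d\gamma^s_P\equiv\sum_{i\le d_s<j}(-1)^{d_s-i+1}\omega_{i,j}\,A_0\wedge\cdots\wedge\widehat{A_i}\wedge\cdots\wedge A_{d_s}\wedge A_j$, one sees that $v\in\ker(d\gamma^s_P)$ if and only if $\omega_{i,j}(v)=0$ for all $1\le i\le d_s<j\le N$. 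I would then simplify this system: by \eqref{eq:omega1} the forms $\omega_{i,j}$ with $i\le d_{s-1}$ and $j>d_s$ vanish identically, while for $i=i^{(s)}\in(d_{s-1},d_s]$ one has $\omega_{i^{(s)},j}=0$ whenever $j>d_{s+1}$; hence only the ``level $(s,s+1)$'' forms survive, and
\[
\ker(d\gamma^s_P)=\bigl\{v : \omega_{i^{(s)},i^{(\ell)}}(v)=0\ \text{for all}\ d_{s-1}<i^{(s)}\le d_s,\ d_s<i^{(\ell)}\le d_{s+1}\bigr\}.
\]

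Next I would compute the common vertex of the system $\abs{I^{s+1}}$. A form of degree $s+1$ is a cone with vertex direction $v$ exactly when $\partial_v V^{(s+1)}_{i^{(\ell)}}\equiv 0$, so the common vertex is $\bP(W_s)$ with $W_s=\{v:\partial_v V^{(s+1)}_{i^{(\ell)}}\equiv 0\ \forall\, i^{(\ell)}\}$. The crucial point is the identity
\[
\partial_v V^{(s+1)}_{i^{(\ell)}}=(s+1)\sum_{i^{(s)}=d_{s-1}+1}^{d_s}\omega_{i^{(s)},i^{(\ell)}}(v)\,V^{(s)}_{i^{(s)}}.
\]
To obtain it I would differentiate the fully symmetric expression \eqref{eq:cacco}: because the coefficients $q_{i_1^{(1)},\dots,i_{s+1}^{(1)};i^{(\ell)}}$ are symmetric in the lower indices, $\partial_v V^{(s+1)}_{i^{(\ell)}}$ is $(s+1)$ times the one-slot contraction with $v$, whose coefficient $\sum_{i_{s+1}^{(1)}}q_{i_1^{(1)},\dots,i_{s+1}^{(1)};i^{(\ell)}}\,\omega_{i_{s+1}^{(1)}}(v)$ is rewritten, via \eqref{eq:relazioni} evaluated at $v$, as $\sum_{i^{(s)}}q_{i_1^{(1)},\dots,i_s^{(1)};i^{(s)}}\,\omega_{i^{(s)},i^{(\ell)}}(v)$; collecting the monomials $\omega_{i_1^{(1)}}\cdots\omega_{i_s^{(1)}}$ reconstitutes $V^{(s)}_{i^{(s)}}$ and yields the displayed identity. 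Finally, the $d_s-d_{s-1}=\Delta_s+1$ forms $V^{(s)}_{i^{(s)}}$ constitute a basis of the $s$-th fundamental form (their number matches $\dim\abs{I^s}+1$ by Proposition \ref{cor_dimensione ff}), hence are linearly independent polynomials; therefore $\partial_v V^{(s+1)}_{i^{(\ell)}}=0$ for every $i^{(\ell)}$ if and only if $\omega_{i^{(s)},i^{(\ell)}}(v)=0$ for all $i^{(s)},i^{(\ell)}$. This gives $W_s=\ker(d\gamma^s_P)$, so the general fibre has dimension $m$ precisely when the common vertex is a $\bP^{m-1}$, which is the assertion; for $s=1$ one recovers the classical fact that the vertex of the quadrics of $\abs{I^2}$ is their common radical.

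The main obstacle is exactly this key identity. A naive product-rule differentiation of $V^{(s+1)}_{i^{(\ell)}}$ from its telescoping form $\sum\omega_{i^{(1)}}\omega_{i^{(1)},i^{(2)}}\cdots\omega_{i^{(s)},i^{(\ell)}}$ produces cross terms carrying the lower connection forms $\omega_{i^{(a-1)},i^{(a)}}(v)$ (for $a<s$) and the coordinates $\omega_{i^{(1)}}(v)$, none of which are controlled by the kernel equations and none of which visibly vanish term by term. The resolution, and the technical heart of the proof, is to bypass the product rule and instead use the full symmetry of the coefficients together with the recursion \eqref{eq:relazioni}, which makes all of these contributions coalesce into the single clean sum above; the linear independence of the $V^{(s)}_{i^{(s)}}$ is then what upgrades the vanishing of $\partial_v V^{(s+1)}_{i^{(\ell)}}$ to the vanishing of each individual $\omega_{i^{(s)},i^{(\ell)}}(v)$, closing the equivalence.
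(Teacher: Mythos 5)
Your proof is correct, and its engine is the same as the paper's: the recursion \eqref{eq:relazioni} of Lemma \ref{lem:jacco}, together with the observation (via \eqref{eq:omega1}) that $\ker d\gamma^s_P$ is cut out by the forms $\omega_{i^{(s)},i^{(\ell)}}$ alone, and the reduction of fibre dimension to $\dim\ker d\gamma^s_P$ at a general point. Where you genuinely depart from the paper is in how the equivalence is organized. The paper proves the two implications separately: for ``fibre dimension $m$ $\Rightarrow$ cones'' it restricts \eqref{eq:relazioni} to $U=\ker d\gamma^s_P$ and deduces that every coefficient $q_{i_1^{(1)},\dotsc,i_{s+1}^{(1)};i^{(\ell)}}$ having an index in a vertex direction vanishes; for the converse it switches to a different tool altogether, namely the Frobenius integrability of the system $\omega_{m+1}=\dotsb=\omega_k=0$ (equation \eqref{eq:frobenio}) and the constancy of $\tilde T^{(s)}_P(V)$ along the leaves of the resulting foliation. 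You instead compress both directions into the single polar identity $\partial_v V^{(s+1)}_{i^{(\ell)}}=(s+1)\sum_{i^{(s)}}\omega_{i^{(s)},i^{(\ell)}}(v)\,V^{(s)}_{i^{(s)}}$ and the linear independence of the $V^{(s)}_{i^{(s)}}$ (their number $d_s-d_{s-1}=\Delta_s+1$ equals the vector-space dimension of $\abs{I^s}$ by Proposition \ref{cor_dimensione ff}), obtaining the exact equality of the common vertex of $\abs{I^{s+1}}$ with $\ker d\gamma^s_P$. What your route buys: it is purely algebraic, needs no integrability argument, and it makes explicit the inversion step that the paper leaves implicit in the converse when it asserts that the cone hypothesis forces \eqref{eq:genero} -- that assertion is exactly your linear-independence point; your identity is also a sharpened form of Corollary \ref{cor:jac}, exhibiting each polar of the $(s+1)$-th form as an explicit combination of basis elements of the $s$-th form. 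What the paper's route buys: the Frobenius/foliation argument proves more than the dimension count -- it exhibits the general fibre as (an open piece of) a leaf along which the osculating space is constant, and this extra geometric structure is precisely what is reused in Subsection \ref{ss:3}, via \eqref{eq:frobenio}, to show that $\Tan^s(V)$ is a ruled variety. So your argument is a clean and complete substitute for the theorem as stated, while the paper's longer converse carries information needed later in the paper.
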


\begin{proof}
We choose a Darboux frame as above, i.e. 
\begin{equation*}
\{A_0;A_1,\dotsc,A_k;A_{k+1},\dotsc,A_{d_2};A_{d_2+1},\dotsc,A_{d_s};\dotsc,A_{d_t};\dotsc,A_{d_{t+1}};\dotsc, A_N\} 
\end{equation*}
such that $A_0,A_1,\dotsc,A_{d_s}$ span ${\tilde T}^{(s)}_P(V)$ and indices 
$d_{s-1}+1\le i^{(s)}\le d_s$ for all $ s=1,\dotsc, t+1$, with $d_1:=k$, etc.

In coordinates, the $s$-th Gauss map  $\gamma^s$  is
\begin{equation*}
 \gamma^s(P)=A_0\wedge\dotsb \wedge A_{d_s},
\end{equation*}
and therefore by \eqref{derivate_frame_eq}, its differential, which is the $(s+1)$-th fundamental form, is given by 
\begin{align*}
 d\gamma^s_P&\equiv\sum_{\substack{
1\le i \le d_s\\
d_s+1\le j \le N
}}(-1)^{d_s-i+1} \omega _{i,j} A_0\wedge\dotsb\wedge \hat {A_i}\wedge\dotsb \wedge A_{d_s}\wedge A_{j}, & \mod \tilde T^{(s)}_P V;
\end{align*}
therefore, since $\gamma^s$ has fibres of dimension $m$ if and only if  $d\gamma^s_P$ has rank $k-m$ (recall that $P$ is a general point)
 $\gamma^s$ has fibres of dimension $m$ if and only if  the space  
\begin{equation*}
U^*:=\gen{\omega _{i,j}}_{\substack{
1\le i \le d_s\\
d_s+1\le j \le N
}}\subset T^*_P(V) 
\end{equation*}
has dimension $k-m$. Dually, this space defines a subspace  $U\subset T_P(V)$ of dimension $m$, defined by the equations
\begin{align*}
\omega _{i^{(h)},i^{(\ell)}}&=0, & h=1,\dotsc, s;\ &\ell\ge s+1. 
\end{align*}
Now, we prove that $V^{(s+1)}_{i^{(\ell)}}$ are cones with vertex $\bP(U)$: 
let us suppose to choose a frame such that $\omega_{m+1},\dotsc,\omega_k$ form a basis for $U^*$,
that is 
\begin{equation}\label{eq:genero}
\gen{\omega _{i,j}}_{\substack{
1\le i \le d_s\\
d_s+1\le j \le N
}}=\gen{\omega_{m+1},\dotsc,\omega_k}.  
\end{equation}
Moreover, we choose indices $1\le j_1^{(1)}, \dotsc, j_h^{(1)}\le m$, and $m+1\le k_1^{(1)}, \dotsc, k_h^{(1)}\le k$. 

From \eqref{eq:relazioni}, the points of $U$ satisfy also 
\begin{equation}\label{eq:relazioni1}
0=\sum_{i^{(h)}}q_{i_1^{(1)}, \dotsc, i_h^{(1)};i^{(h)}} \omega_{i^{(h)}, i^{(\ell)}}=\sum_{i_{h+1}^{(1)}}q_{i_1^{(1)}, \dotsc, i_{h+1}^{(1)};i^{(\ell)}} \omega_{i_{h+1}^{(1)}},
\end{equation}
which implies, for every choice of the indices as above, that 
\begin{equation*}
q_{i_1^{(1)}, \dotsc, j_{h+1}^{(1)};i^{(\ell)}}=0.
\end{equation*}
From this we infer, from \eqref{eq:cacco}, that 
\begin{equation}
V^{(s+1)}_{i^{(\ell)}}=\sum_{k_1^{(1)}, \dotsc, k_{s+1}^{(1)}}q_{k_1^{(1)}, \dotsc, k_{s+1}^{(1)};i^{(\ell)}}\omega_{k_1^{(1)}}\dotsm \omega_{k_{s+1}^{(1)}},
\end{equation}
that is, $V^{(s+1)}_{i^{(\ell)}}$ is a cone with vertex  $\bP^{m-1}=\bP(U)$.

Vice versa, let us suppose that the polynomials  $V^{(s+1)}_{i^{(\ell)}}$ are singular along a $\bP^{m-1}=\bP(U)$; by choosing our frame, we can suppose 
that $U$ is defined by 
\begin{align*}
\omega _j&=0, & j=m+1,\dotsc, k, 
\end{align*}
or that \eqref{eq:genero} holds. 
Then, we have
\begin{equation}\label{eq:frobenio}
0=d\omega_{i^{(h)}, i^{(\ell)}}=\sum_{i=1}^{d_s}\omega_{i^{(h)}, i}\wedge \omega_{i, i^{(\ell)}}+ 
\sum_{j=d_s+1}^N\omega_{i^{(h)}, j}\wedge \omega_{j, i^{(\ell)}}
\end{equation}
and $\omega_{i, i^{(\ell)}}, \omega_{i^{(h)}, j}\in U^*$ and therefore the Frobenius integrability conditions for $\omega_{m+1},\dotsc,\omega_k$
are satisfied; then, from 
\begin{equation*}
d A_{i^{(s)}}= \sum_{j=d_s+1}^N\omega_{i^{(s)},j}A_j 
\end{equation*}
we conclude that the $s$-th osculating space remains constant along the leaves of the foliation defined by $\omega_{m+1},\dotsc,\omega_k$.

\end{proof}

\begin{re}
As we recalled in the introduction, one can deduce, from Theorem~\ref{thm:cono}, that, for $s=1$ (i.e. the usual Gauss map),
the fibres of $\gamma^1$ are  (Zariski open subsets of) linear spaces: see \cite[(2.10)]{G-H}. 
Moreover, in \cite[\S 3]{G-H} they give a general description of these varieties; we will pursue further this kind of 
description for higher $s$ in Subsection \ref{ss:3}.
\end{re}

\begin{ex}
The first nontrivial example of application of the preceding theorem with $s\ge 2$ is given by surfaces whose second Gauss map $\gamma^2$ 
has as image a curve; 
this case has been studied in \cite{C} and in \cite{F-I}: they proved that the curves which are fibres of  $\gamma^2$ are indeed 
contained in $3$-dimensional projective spaces; by Theorem \ref{thm:cono}, we deduce that the third fundamental form at the 
general point is given by a perfect cube.
\end{ex}

Let us see the following consequence of the preceding result. 
\begin{cor}
If the $s$-th Gauss map $\gamma^s$ ($s\ge 1$) for a $k$-dimensional variety $V$ has fibres of dimension $m$, then 
\begin{equation}\label{eq:dim}
d_{s+1}\le d_s+\binom{k-m+s}{s+1},
\end{equation}
and the equality is reached if and only if the $(s+1)$-th 
fundamental form $\abs{I^{s+1}}$ is given by the linear system of all the cones over a $\bP^{m-1}\subset \bP T_P(V)$.
\end{cor}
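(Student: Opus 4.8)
The plan is to combine Theorem~\ref{thm:cono} with the dimension formula of Proposition~\ref{cor_dimensione ff} and a routine count of cones with prescribed vertex. First I would recall that by Proposition~\ref{cor_dimensione ff} one has $d_{s+1}=d_s+\Delta_{s+1}+1$, so the inequality \eqref{eq:dim} is equivalent to the estimate
\begin{equation*}
\Delta_{s+1}\le \binom{k-m+s}{s+1}-1
\end{equation*}
on the dimension of the $(s+1)$-th fundamental form $\abs{I^{s+1}}$. Since $\gamma^s$ has fibres of dimension $m$, Theorem~\ref{thm:cono} tells us that at a general point $P$ every member of $\abs{I^{s+1}}$ is a cone of degree $s+1$ in $\bP(T_P(V))\cong\bP^{k-1}$ with vertex a fixed $\bP^{m-1}=\bP(U)$. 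Thus the whole problem is reduced to bounding the dimension of a linear system of cones with a prescribed linear vertex.

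Next I would carry out the count. Choosing, as in the proof of Theorem~\ref{thm:cono}, a frame for which $\bP(U)$ is the coordinate subspace $\gen{\omega_1,\dotsc,\omega_m}$, a form of degree $s+1$ is a cone with vertex $\bP(U)$ precisely when it involves only the $k-m$ linear forms $\omega_{m+1},\dotsc,\omega_k$ transverse to $U$. The space of such forms is the space of homogeneous polynomials of degree $s+1$ in $k-m$ variables, whose dimension is the number of monomials of that degree, namely $\binom{k-m+s}{s+1}$. Hence the linear system of all cones of degree $s+1$ with vertex $\bP(U)$ has projective dimension $\binom{k-m+s}{s+1}-1$, and by Theorem~\ref{thm:cono} the fundamental form $\abs{I^{s+1}}$ is a linear subsystem of it. This immediately yields $\Delta_{s+1}\le\binom{k-m+s}{s+1}-1$, and substituting back into $d_{s+1}=d_s+\Delta_{s+1}+1$ gives \eqref{eq:dim}. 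As a sanity check, for $m=0$ this recovers the general bound $d_{s+1}\le d_s+\binom{k+s}{s+1}$ already recorded before Definition~\ref{ff_def}.

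Finally I would treat the equality case: equality in \eqref{eq:dim} holds exactly when $\Delta_{s+1}=\binom{k-m+s}{s+1}-1$, i.e.\ when the inclusion of $\abs{I^{s+1}}$ into the system of all cones of degree $s+1$ with vertex $\bP^{m-1}$ is an equality, which is precisely the assertion that $\abs{I^{s+1}}$ is the full linear system of such cones. The only point requiring a word of care is the identification of \emph{cone of degree $s+1$ with vertex $\bP^{m-1}$} with \emph{homogeneous polynomial of degree $s+1$ in the $k-m$ transverse coordinates}; this is the elementary fact that a hypersurface is a cone over a given linear vertex exactly when its equation is free of the variables cutting out the complementary space, and it is already visible in the explicit shape of $V^{(s+1)}_{i^{(\ell)}}$ produced in the proof of Theorem~\ref{thm:cono}. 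I do not expect any serious obstacle here: the argument is purely a dimension count resting on the cone description, with no geometric input beyond Theorem~\ref{thm:cono} and Proposition~\ref{cor_dimensione ff}.
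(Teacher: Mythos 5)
Your proposal is correct and follows essentially the same route as the paper: reduce via Proposition~\ref{cor_dimensione ff} to bounding $\Delta_{s+1}$, invoke Theorem~\ref{thm:cono} to see that $\abs{I^{s+1}}$ sits inside the linear system of cones of degree $s+1$ with vertex a fixed $\bP^{m-1}$, and count that system as the complete system of degree-$(s+1)$ forms in the $k-m$ transverse variables. Your count (hypersurfaces of degree $s+1$ in a $\bP^{k-m-1}$, giving projective dimension $\binom{k-m+s}{s+1}-1$) is the correct one, and incidentally fixes a small slip in the paper, which writes $\bP^{k-m-2}$ for this space.
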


\begin{proof}
By Proposition \ref{cor_dimensione ff} we have to estimate the dimension $\Delta_{s+1}$ of the linear system given by the $(s+1)$-th 
fundamental form, but this, by Theorem \ref{thm:cono}, is formed by degree $(s+1)$ hypersurfaces which are cones over a 
$\bP^{m-1}$ in a  $\bP^{k-1}(= \bP (T_P(V)))$.  
Since the linear systems of these cones has (projective) dimension $\binom{k-m+s}{s+1}-1$ (is equal to the complete linear system of 
degree $(s+1)$ hypersurfaces in a $\bP^{k-m-2}$), we have the assertion.  
\end{proof}

\begin{ex}\label{ex:1}
Clearly, if $m=k$, then  $\gamma^s$ is constant, $\abs{I^{s+1}}=\emptyset$ and the variety is contained in a $\bP^{d_s}$. 
\end{ex}

\subsection{Varieties with a curve as the image of the $s$-th Gauss map}\label{ss:3}
Let us start with the following

\begin{lem}\label{lem:ff}
If $d_{s+1}=d_s+1$, then $
d_{s+2}\le d_s+2$. Moreover, if $d_{s+2}= d_s+2
$, then $\abs{I^{s+1}}$ and $\abs{I^{s+2}}$ 
are generated by powers 
of the same linear form
, and if
 $d_s+1=d_{s+2}
$, then our variety $V$ is contained in 
$\bP^{d_s+2}$
. 
\end{lem}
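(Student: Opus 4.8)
The plan is to exploit the recursive relations \eqref{eq:relazioni} of Lemma \ref{lem:jacco} in the extreme situation where the $(s+1)$-th fundamental form degenerates to a single hypersurface. By Proposition \ref{cor_dimensione ff}, the hypothesis $d_{s+1}=d_s+1$ means $\Delta_{s+1}=0$, so $\abs{I^{s+1}}$ is a single nonzero form of degree $s+1$; in the Darboux frame there is exactly one index $p:=d_s+1=d_{s+1}$ at level $s+1$, and I write $F:=V^{(s+1)}_{p}$, with coefficients $a_{i_1\cdots i_{s+1}}:=q_{i_1^{(1)},\dots,i_{s+1}^{(1)};p}$.

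First I would specialise \eqref{eq:relazioni} by replacing $s$ with $s+1$. Since the inner sum over $i^{(s+1)}$ now runs over the single value $p$, for every $\ell\ge s+2$ it collapses to
\begin{equation*}
a_{i_1\cdots i_{s+1}}\,\theta^{(\ell)}=\sum_{j=1}^{k}q_{i_1^{(1)},\dots,i_{s+1}^{(1)},j;i^{(\ell)}}\,\omega_{j},\qquad \theta^{(\ell)}:=\omega_{p,i^{(\ell)}}.
\end{equation*}
Multiplying by $\omega_{i_1}\cdots\omega_{i_{s+1}}$ and summing over the lower indices identifies, via \eqref{eq:cacco}, the $(s+2)$-th fundamental forms as $V^{(s+2)}_{i^{(\ell)}}=F\cdot\theta^{(\ell)}$. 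Comparing instead the coefficients of the independent $1$-forms $\omega_{1},\dots,\omega_{k}$ gives $q_{i_1\cdots i_{s+1}j;i^{(\ell)}}=a_{i_1\cdots i_{s+1}}c_j$, where $\theta^{(\ell)}=\sum_j c_j\omega_j$.

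The crux — and the step I expect to be the main obstacle — is to convert the \emph{symmetry} of the coefficients $q_{i_1\cdots i_{s+1}j;i^{(\ell)}}$ into a rigidity statement for $F$. Since these are symmetric under interchanging $j$ with $i_{s+1}$, one obtains $a_{i_1\cdots i_s j}c_{i_{s+1}}=a_{i_1\cdots i_s i_{s+1}}c_j$. Whenever $\theta^{(\ell)}\ne 0$, I would pick $c_{j_0}\ne 0$; this relation then forces every partial derivative $\partial F/\partial\omega_j$ to equal the constant multiple $(c_j/c_{j_0})\,\partial F/\partial\omega_{j_0}$, so that $\nabla F$ is everywhere proportional to the fixed covector $(c_j)_j$. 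Hence $F$ is constant along all directions annihilating the linear form $L:=\theta^{(\ell)}$, and being homogeneous of degree $s+1$ it must be a power $F=\lambda L^{s+1}$ (this is the apolarity/Euler computation I would carry out in detail). Because over $\CC$ a nonzero perfect power determines its linear factor up to scalar, all the nonvanishing $\theta^{(\ell)}$ are proportional to a single linear form $L$.

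Finally I would read off the three assertions from $V^{(s+2)}_{i^{(\ell)}}=F\,\theta^{(\ell)}$. Each $(s+2)$-th form is thus either $0$ or proportional to $L^{s+2}$, so $\Delta_{s+2}\le 0$ and, by Proposition \ref{cor_dimensione ff}, $d_{s+2}\le d_{s+1}+1=d_s+2$. If $d_{s+2}=d_s+2$ then $\Delta_{s+2}=0$, hence some $\theta^{(\ell)}\ne 0$ and $\abs{I^{s+1}}=\gen{L^{s+1}}$, $\abs{I^{s+2}}=\gen{L^{s+2}}$ are powers of the same $L$. If instead $d_{s+2}=d_s+1$, then every $V^{(s+2)}_{i^{(\ell)}}=0$; since $F\ne 0$ this forces $\theta^{(\ell)}=\omega_{p,i^{(\ell)}}=0$ for all $\ell\ge s+2$, whence $dA_p\in\tilde T^{(s+1)}_P(V)$. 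Together with $dA_{i^{(r)}}\in\tilde T^{(r+1)}_P(V)\subseteq\tilde T^{(s+1)}_P(V)$ for $r\le s$ and $dA_0\in\tilde T_P(V)$, the subspace $\gen{A_0,\dots,A_{d_{s+1}}}$ is stable under $d$, so it stays constant along $V$ and $V\subseteq\bP^{d_{s+1}}=\bP^{d_s+1}\subseteq\bP^{d_s+2}$.
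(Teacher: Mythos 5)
Your proof is correct, but it reaches the key rigidity statement by a genuinely different mechanism than the paper. The paper's proof stays at the level of polynomial algebra: it invokes Corollary~\ref{cor:jac} (the Jacobian of $\abs{I^{s+2}}$ lies in $\abs{I^{s+1}}=\gen{f}$), applies Euler's formula to get $g=f\ell$ for any $g\in\abs{I^{s+2}}$, and then runs a divisibility induction on the partial derivatives of $g$ to force $f=\omega_1^{s+1}$ (hence $g=\omega_1^{s+2}$); the degenerate case $g=0$ is then dispatched by citing Theorem~\ref{thm:cono} via Example~\ref{ex:1}. You instead specialise the recursion \eqref{eq:relazioni} of Lemma~\ref{lem:jacco} one level up, where the single index $p=d_s+1$ collapses the left-hand sum, obtaining the factorisation $V^{(s+2)}_{i^{(\ell)}}=F\cdot\omega_{p,i^{(\ell)}}$, and you extract the perfect-power property from the symmetry of the coefficients $q_{i_1^{(1)},\dotsc,i_{s+2}^{(1)};i^{(\ell)}}$ asserted in that lemma (a rank-one condition on a symmetric tensor), rather than from Euler's formula. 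Both arguments ultimately rest on Lemma~\ref{lem:jacco} --- the paper's Corollary~\ref{cor:jac} is itself a consequence of it --- so the skeletons are parallel, but your route buys two things: it avoids the paper's somewhat fiddly induction on the multiplicity of $\omega_1$ in $f$, and it identifies the linear forms geometrically as the connection forms $\omega_{p,i^{(\ell)}}$, so that in the case $d_{s+2}=d_s+1$ you can conclude directly (all $\omega_{p,i^{(\ell)}}=0$ makes $\gen{A_0,\dotsc,A_{d_{s+1}}}$ stable under $d$, hence constant, giving $V\subseteq\bP^{d_s+1}$) without routing through Theorem~\ref{thm:cono}. The paper's version is more modular, in that it only uses the cleanly quotable Jacobian containment and elementary facts about forms, whereas yours requires trusting the full strength of Lemma~\ref{lem:jacco}, including the index symmetry of the coefficients produced by the iterated Cartan lemma.
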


\begin{proof}
We apply Proposition \ref{cor_dimensione ff} to deduce that the  $(s+1)$-th fundamental form $\abs{I^{s+1}}$ is generated by only one form. 
Call this form $f$; by Corollary \ref{cor:jac}, if $g$ is a form in $\abs{I^{s+2}}$, then all its partial derivatives are in 
$\abs{I^{s+1}}$; therefore, by Euler's formula 
\begin{equation*}
g=\frac{1}{s+2}\sum_{i=1}^k\omega_i \frac{\partial g}{\partial \omega_i}=\frac{1}{s+2}\sum_{i=1}^k\omega_i a_if
\end{equation*}
$a_i\in \CC$, $i=1,\dotsc,k$, and we deduce that either $g$ is zero or $g=f\ell$, where $\ell$ is a linear form. 
If $g=f\ell$, by a change of coordinates we can suppose that $\ell=\omega_1$; then, we have
\begin{align*}
\frac{\partial g}{\partial \omega_1}&=f+\omega_1\frac{\partial f}{\partial \omega_1}\in\abs{I^{s+1}}=(f)  & \\
\frac{\partial g}{\partial \omega_j}&=\omega_1\frac{\partial f}{\partial \omega_j}\in\abs{I^{s+1}}=(f) & j=2,\dotsc,k\\
\end{align*}
which imply that either $f$ does not depend on $\omega_1$, but then $g$ is the zero form, or $f$ is reducible of the form 
$f=\omega_1h$. We can now proceed by induction to prove that $f=\omega_1^{s+1}$: if $f=\omega_1^{\ell}\phi$, then 
\begin{align*}
\frac{\partial g}{\partial \omega_1}&=(\ell+1)\omega^{\ell}\phi+\omega_1^{\ell+1}\frac{\partial \phi}{\partial \omega_1}=n \omega_1^{\ell}\phi & \\
\frac{\partial g}{\partial \omega_j}&=\omega_1^{\ell+1}\frac{\partial \phi}{\partial \omega_j}=n_j \omega_1^{\ell}\phi  & j=2,\dotsc,k\\
\end{align*}
$n,n_j\in \CC$, $j=1,\dotsc,k$, and, as above,  either $\phi$ does not depend on $\omega_1$, but then $g$ is the zero form, 
or $\phi$ is reducible of the form 
$\phi=\omega_1\phi'$.

If $g=0$, then we can conclude from Theorem \ref{thm:cono} (or, better, from  Example \ref{ex:1}). 
\end{proof}

\begin{cor}\label{cor:coro}
If $d_{s+2}=d_{s+1}+1=d_s+2$, then the  $s$-th and $(s+1)$-th Gauss maps have fibres of dimension $k-1$.
\end{cor}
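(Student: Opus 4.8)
The plan is to feed the numerical hypothesis into Lemma~\ref{lem:ff} and then read off the fibre dimensions from Theorem~\ref{thm:cono}. First I would unpack the hypothesis $d_{s+2}=d_{s+1}+1=d_s+2$ into the two statements $d_{s+1}=d_s+1$ and $d_{s+2}=d_s+2$. These are precisely the hypotheses of Lemma~\ref{lem:ff}, and since here $d_{s+2}=d_s+2\neq d_s+1$ we land in the nondegenerate branch of that lemma: the $(s+1)$-th and $(s+2)$-th fundamental forms $\abs{I^{s+1}}$ and $\abs{I^{s+2}}$ are each generated by a single power of one common linear form. After choosing the frame so that this linear form is $\omega_1$, I would write the generators as $\omega_1^{s+1}$ and $\omega_1^{s+2}$ on $\bP T_P(V)\cong\bP^{k-1}$.

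Next I would recognise these generators as cones. A form on $\bP^{k-1}$ that involves only the single coordinate $\omega_1$ omits the remaining $k-1$ coordinates, so in the sense used in the proof of Theorem~\ref{thm:cono} it is a cone whose vertex is the locus $\{\omega_1=0\}\cong\bP^{k-2}$; that is, a cone over a $\bP^{m-1}$ with $m=k-1$. Thus both $\abs{I^{s+1}}$ and $\abs{I^{s+2}}$ consist of cones over a fixed $\bP^{k-2}\subset\bP T_P(V)$.

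Finally I would invoke Theorem~\ref{thm:cono} twice. Applied at level $s$, it says that $\gamma^s$ has fibres of dimension $m$ exactly when the $(s+1)$-th fundamental form consists of cones over a $\bP^{m-1}$; since $\abs{I^{s+1}}$ is such a cone with $m=k-1$, the fibres of $\gamma^s$ have dimension $k-1$. Applying the same theorem with $s$ replaced by $s+1$, where the governing form is now $\abs{I^{s+2}}$ (a single form, as $\Delta_{s+2}=0$ by Proposition~\ref{cor_dimensione ff}, and again a cone over a $\bP^{k-2}$), shows likewise that $\gamma^{s+1}$ has fibres of dimension $k-1$.

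The only real point to watch is the vertex bookkeeping in the cone interpretation: one must verify that a pure power $\omega_1^{s+1}$ in $k$ homogeneous coordinates is, in the precise sense of Theorem~\ref{thm:cono}, a cone over a $\bP^{k-2}$ and hence matches $m=k-1$, rather than miscounting the vertex dimension. Everything else is a direct citation of Lemma~\ref{lem:ff} and Theorem~\ref{thm:cono}, so once the forms are pinned down as powers of a single linear form the conclusion follows immediately.
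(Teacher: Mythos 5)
Your proof is correct and takes essentially the same route as the paper: the paper's own two-line proof likewise feeds the hypothesis into Lemma~\ref{lem:ff} to conclude that $\abs{I^{s+1}}$ and $\abs{I^{s+2}}$ are cones over a $\bP^{k-2}$, and then cites Theorem~\ref{thm:cono} to get fibre dimension $k-1$ for both Gauss maps. Your explicit vertex bookkeeping (a power of a single linear form in the $k$ coordinates of $\bP T_P(V)$ is a cone with vertex the $\bP^{k-2}$ where that form vanishes, hence $m=k-1$) simply spells out what the paper leaves implicit.
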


\begin{proof}
By Lemma \ref{lem:ff} we have that the $\abs{I^{s+1}}$ and $\abs{I^{s+2}}$ are cones over a $\bP^{k-2}$, therefore,
by Theorem \ref{thm:cono}, the thesis follows.
\end{proof}

The case we are interested in now is the one with 
$m=k-1$ (i. e. the next after Example \ref{ex:1}), 
therefore, by \eqref{eq:dim}, $(d_s\le)d_{s+1}\le d_s+1$; if $d_{s+1}<d_s+1$, then  $d_{s+1}=d_s$, but in this case 
we would have no elements in   $\abs{I^{s+1}}$, and therefore we would be in Example \ref{ex:1}. We deduce that $d_{s+1}= d_s+1$, and 
we are in the case of Lemma \ref{lem:ff}, and by Theorem  \ref{thm:cono} the image of the  $s$-th  Gauss map has dimension $1$; 
the complete description of such cases will follow from \cite[(2.4)]{G-H}. To use this result, we need some notations and result: 
we start by recalling the notation introduced in \cite[\S 2(a)]{G-H}; let $B$ be an $r$-dimensional variety and let 
\begin{equation*}
f\colon B \to \GG(d,N)
\end{equation*}
be a morphism; let $y\in B$ be a general point, and let $S_y\subset V$ (where $\bP^N=\bP(V)$) be the $(d+1)$-dimensional vector space which 
corresponds to $f(y)$. Then, the differential of $f$ in $y$ can be thought of 
\begin{equation*}
    d_y f \colon T_y B \to  \Hom(S_y, N_y),
\end{equation*}
where $N_y:=\frac{V}{S_y}$. More explicitly, if $y_1,\dotsc, y_r$ are local coordinates of $B$ near $y$, and if $e_0(y), \dotsc e_d(y)$ is 
a basis for the the $(d+1)$-dimensional vector spaces near  $S_y$, we have 
\begin{equation*}
    d_y f\left(\frac{\partial }{\partial y_i}\right)(e_j(y))\equiv \frac{\partial (e_j(y))}{\partial y_i}\mod S_y 
\end{equation*}
for $i=1,\dotsc,r$ and $j=0,\dotsc d$. Then, fixed $w\in  T_y B$, one can define the ``infinitely near'' space 
\begin{equation*}
\frac{S_y}{dw}:=\im d_y f(w)\subset N_y,
\end{equation*}
and if we denote by $[v]$ the class of $v\in V$ in $N_y$, we consider the following subspaces of $V$:   
\begin{align*}
S_y+\frac{S_y}{dw}&:=\left\{v\in V\mid v=s+t, s\in S_y, [t]\in \frac{S_y}{dw}\right\}\\
S_y\cap\frac{S_y}{dw}&:=\left\{s\in S_y\mid ([0]=)[s]\in  \frac{S_y}{dw}\right\}=\ker (d_y f(w)). 
\end{align*}
Obviously, we have
\begin{align*}
\rk(d_yf)&=\rho& &\iff &\dim(S_y+\frac{S_y}{dw})&=d+1+\rho& &\iff &\dim(S_y\cap\frac{S_y}{dw})&=d+1-\rho.
\end{align*}
We also denote the projective subspaces of $\bP^N$ associated to the vector spaces just defined by 
\begin{align*}
\bP_y^d+\frac{\bP_y^d}{dw}&:=\bP(S_y+\frac{S_y}{dw})\\
\bP_y^d\cap\frac{\bP_y^d}{dw}&:=\bP(S_y\cap\frac{S_y}{dw}). 
\end{align*}

The case of $r=1$ is completely solved in \cite{G-H}; as in the cited article, with 
abuse of notation we denote by $y$ the local coordinate of $B$ near our general point $y$:
\begin{prop}{\cite[(2.4)]{G-H}}\label{prop:ghc}
With notations as above, if $r=1$ and  $\dim(\bP_y^d\cap\frac{\bP_y^d}{dw})=d-\rho$, 
then there exist $\rho$ curves in $\bP^N$, 
 $C_1, \dotsc, C_\rho$ parametrised by $y$, $a_1,\dotsc,a_\rho\in \NN$, and a fixed  $(d-\sum_{i=1}^\rho a_i)$-dimensional  
subspace $H\subset \bP^N$ such that $\bP_y^d$ is the span of $H$ together with the $(a_i-1)$-osculating $(a_i-1)$-planes 
 ${\tilde T}^{(a_i-1)}_P(C_i)\subset\bP^N$, that is 
\begin{equation*}
\bP_y^d=\gen{{\tilde T}^{(a_1-1)}_P(C_1),\dotsc, {\tilde T}^{(a_\rho-1)}_P(C_\rho), H}.
\end{equation*}
\end{prop}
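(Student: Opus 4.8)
The plan is to read the hypothesis as a statement about a system of \emph{linear ordinary differential equations} and to put it into a normal form. Since $r=1$, fix a local coordinate $y$ on $B$ and regard $f$ as a moving $(d+1)$-dimensional subspace $S_y\subset V$, i.e. as a rank-$(d+1)$ subbundle $\mathcal S\subset\underline V:=V\times B$ of the trivial bundle endowed with the flat connection $\nabla:=d/dy$. In this language $d_yf$ is precisely the second fundamental form $A\colon\mathcal S\to\underline V/\mathcal S$ (fibre $N_y$), $s\mapsto\nabla s\bmod\mathcal S$, and the hypothesis $\dim\!\bigl(\bP^d_y\cap\tfrac{\bP^d_y}{dw}\bigr)=d-\rho$ says exactly that $\rk A=\rho$ at the general $y$. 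The feature of the case $r=1$ that I would exploit throughout is that a $\nabla$-invariant subbundle of $\underline V$ is a \emph{constant} subspace of $V$ (a first-order linear system has a local flat frame, with no integrability obstruction); this is what produces honest fixed subspaces, and is exactly what fails for $r>1$.

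Next I would introduce two filtrations of $\mathcal S$. The \emph{ascending} osculating filtration $\mathcal S=\mathcal S^{(0)}\subseteq\mathcal S^{(1)}\subseteq\cdots$, $\mathcal S^{(m)}:=\mathcal S^{(m-1)}+\nabla\mathcal S^{(m-1)}$, stabilises at a $\nabla$-invariant bundle, hence at a constant subspace $W\subseteq V$ (the span of the whole family), so that all the geometry takes place inside $W$. The \emph{descending} derived flag $\mathcal S=\mathcal S_{(0)}\supseteq\mathcal S_{(1)}\supseteq\cdots$, defined by $\mathcal S_{(j+1)}:=\{s\in\mathcal S_{(j)}:\nabla s\in\mathcal S_{(j)}\}$, has $\mathcal S_{(1)}=\ker A$ and terminates in the maximal $\nabla$-invariant subbundle of $\mathcal S$, which by the previous remark is a constant subspace: this is the fixed $H$ of the statement. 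I would then show that the drops $\sigma_j:=\rk\mathcal S_{(j-1)}-\rk\mathcal S_{(j)}$ are non-increasing (a reproductive property: if the derivative of a section already lies one step down, so does the derivative of everything below it), with $\sigma_1=\rk A=\rho$. Reading this non-increasing sequence as a Young diagram and passing to the conjugate partition defines the integers $a_i:=\#\{j\ge1:\sigma_j\ge i\}$ for $i=1,\dots,\rho$; the bookkeeping $\sum_i a_i=\sum_j\sigma_j=\dim_{\CC}S_y-\dim_{\CC}H=(d+1)-\dim_{\CC}H$ then yields $\dim H=d-\sum_{i=1}^{\rho}a_i$ projectively, the dimension asserted for the fixed space.

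It then remains to produce the curves and the direct-sum decomposition. The goal is a frame of $\mathcal S$ adapted to the descending flag, of the form $\{v_{i,\ell}\}$ with $0\le\ell\le a_i-1$, $1\le i\le\rho$, together with a flat frame of $H$, arranged into ``osculating blocks'' so that $\nabla v_{i,\ell}\equiv v_{i,\ell+1}$ modulo the lower part of the flag. Granting such a frame, each $v_{i,0}(y)$ defines a map $y\mapsto[v_{i,0}(y)]\in\bP^N$, i.e. a curve $C_i$, whose successive derivatives satisfy $\gen{v_{i,0},v_{i,0}',\dots,v_{i,0}^{(\ell)}}=\gen{v_{i,0},\dots,v_{i,\ell}}$; hence $\gen{v_{i,0},\dots,v_{i,a_i-1}}={\tilde T}^{(a_i-1)}_P(C_i)$, and by construction $S_y=\gen{{\tilde T}^{(a_1-1)}_P(C_1),\dots,{\tilde T}^{(a_\rho-1)}_P(C_\rho),H}$, which is the claim.

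The main obstacle is exactly the construction of this adapted frame: one must simultaneously split off the constant part $H$ and diagonalise the $\nabla$-action on $\mathcal S/H$ into independent osculating blocks, removing the cross-terms that a priori couple different blocks at the same level. I would do this by a descending induction on the derived flag, at each step choosing lifts of a basis of the graded piece $\mathcal S_{(j-1)}/\mathcal S_{(j)}$ and using the freedom in these lifts---together with the uniqueness of solutions of the linear ODE, available because $r=1$---to normalise $\nabla$ to block form; the non-increasing behaviour of the $\sigma_j$ guarantees that the blocks close up consistently and that the $v_{i,0}$ are genuinely non-constant, so that the $C_i$ are honest curves whose osculating spaces have the stated orders. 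Degenerate situations---where some block would contribute only a constant direction---are absorbed into $H$ and simply lower the corresponding $a_i$, consistently with the dimension count.
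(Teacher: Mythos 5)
A remark before the review proper: the paper does not prove this proposition at all---it is quoted from \cite[(2.4)]{G-H} and used as a black box---so your attempt can only be measured against the original moving-frames argument of Griffiths and Harris, of which your vector-bundle formulation is a modern repackaging. The skeleton you set up is correct: reading the hypothesis as $\rk A=\rho$; the principle that over a one-dimensional base a $\nabla$-invariant subbundle of the trivial bundle is a constant subspace of $V$ (this is exactly where $r=1$ enters, as you say); the derived flag $\mathcal S=\mathcal S_{(0)}\supseteq\mathcal S_{(1)}\supseteq\cdots$ terminating in the fixed subspace $H$; and the conjugate-partition bookkeeping giving $\rho$ blocks of lengths $a_i$ with $\sum_i a_i=(d+1)-\dim_\CC H$. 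The fact that the $\sigma_j$ are non-increasing, which you assert via a vague ``reproductive property'', is true, and the clean mechanism is worth recording because you need it again later: $\nabla$ induces well-defined, $\mathcal O$-linear, injective maps $\mathcal S_{(j)}/\mathcal S_{(j+1)}\to\mathcal S_{(j-1)}/\mathcal S_{(j)}$, $[s]\mapsto[\nabla s]$ (well-defined and injective directly from the definition of the flag, and $\mathcal O$-linear because $\nabla(fs)\equiv f\nabla s$ modulo $\mathcal S_{(j)}$). One standard caveat you omit: the $\mathcal S_{(j)}$ are kernels of bundle maps, hence subbundles of constant rank only near a general $y$, which is where the statement lives.

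The gap is in the last step, and you have misdiagnosed where the difficulty lies. First, as stated your intermediate claim $\gen{v_{i,0},\dots,v_{i,\ell}}=\gen{v_{i,0},v_{i,0}',\dots,v_{i,0}^{(\ell)}}$ does not follow from congruences $\nabla v_{i,\ell}\equiv v_{i,\ell+1}$ taken only modulo lower pieces of the flag: those pieces contain directions belonging to the other (longer) blocks, so the honest derivatives of $v_{i,0}$ acquire cross-block components and the two spans differ in general. Second---and this is the point---the full block-diagonalisation of $\nabla$ that you then propose to achieve by ODE normalisation is unnecessary, because the proposition asserts only a span identity. The construction that closes the argument with the tools you already have is: by descending induction on the level $j=t,\dots,1$, choose, for each block with $a_i=j$, a section $c_i$ of $\mathcal S_{(j-1)}$ so that the classes of all $c_{i'}^{(a_{i'}-j)}$ with $a_{i'}\ge j$ form a basis of $\mathcal S_{(j-1)}/\mathcal S_{(j)}$; the inductive step is possible precisely because the injective graded maps above carry the basis already built at level $j+1$ to $\sigma_{j+1}$ independent classes at level $j$, which you complete arbitrarily. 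Then simply define $v_{i,\ell}:=c_i^{(\ell)}$, so the block relations hold exactly and there are no cross-terms to remove. Since $c_i$ is a section of $\mathcal S_{(a_i-1)}$, each derivative $c_i^{(\ell)}$ is a section of $\mathcal S_{(a_i-1-\ell)}$, whence ${\tilde T}^{(a_i-1)}_y(C_i)\subset\bP_y^d$; each $c_i$ is non-constant, because an invariant line is constant and would force $[c_i]=0$ in its graded piece; and the collection of all the $c_i^{(\ell)}$ together with a basis of $H$ is linearly independent (it is adapted to the flag with independent classes in every graded piece), hence is a basis of $S_y$ by the count $\sum_i a_i+\dim_\CC H=d+1$. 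This yields $\bP_y^d=\gen{{\tilde T}^{(a_1-1)}_y(C_1),\dots,{\tilde T}^{(a_\rho-1)}_y(C_\rho),H}$, completing the proof along your own lines but with strictly less machinery than your plan calls for.
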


Let us now use this result in our case: 
we take $B$ as the image of the $s$-th Gauss map:
\begin{equation*}
\gamma^s\colon V \to \im(\gamma^s)=:B \overset{i}\hookrightarrow \GG(d_s,N),
\end{equation*}
$\dim B=1$ and $S_y$ denotes just the $d_s$-dimensional vector subspace ${\tilde T}_P^{(s)}(V)\subset \CC^{N+1}$, 
where $y=\gamma^s(P)$, defined 
by---with notations as in the proof of Theorem \ref{thm:cono}---$S_y:=\gen{A_0,\dotsc,A_{d_s}}$. 
Moreover, 
by \eqref{eq:frobenio}, we have that $B$ defines also a ruled variety, which is indeed the variety of  osculating $s$-spaces to $V$:  
${\Tan}^s(V):=\cup_{y\in B}\bP_y^{d_s}$ of dimension $d_s+1$. 
By Corollary \ref{cor:coro}, if  ${\tilde T}_P^{(s)}(V)\neq {\tilde T}_P^{(s+1)}(V)$, the same happens with  ${\tilde T}_P^{(s+1)}(V)$:
we have, on  $B$,  the  $S_y'$'s, which are the $d_{s+1}$-dimensional vector subspaces  ${\tilde T}_P^{(s+1)}(V)\subset \CC^{N+1}$, 
they define the (ruled) variety of  osculating $(s+1)$-spaces to $V$
${\Tan}^{s+1}(V):=\cup_{y\in B}\bP_y^{d_{s+1}}$ of dimension $d_s+2$ 
such that $V\subset {\Tan}^s(V) \subset {\Tan}^{s+1}(V)$. 

Since, by definition of the $(s+1)$ osculating space, we have that, for $y\in B$,   
\begin{equation*}
\bP_y^{d_s}+\frac{\bP_y^{d_s}}{dy}\subset \bP_y^{d_{s+1}},
\end{equation*}
which implies
\begin{equation*}
\dim(\bP_y^{d_s}\cap\frac{\bP_y^{d_s}}{dy})=d_s-1.
\end{equation*}
Then, by Proposition \ref{prop:ghc}
\begin{prop}
If the $s$-th Gauss map has dimension one, then 
there exists a curve $C\subset\bP^N$ having $(a-1)$-osculating $(a-1)$ planes $\bP_y^{a-1}:={\tilde T}_y^{(a-1)}(C)$ and a fixed 
 $\bP^{N-a-1}$ such that 
\begin{equation*}
\bP_y^{d_s}=\bP_y^{a-1}+\bP^{N-a-1}.
\end{equation*}
In other words,  ${\Tan}^s(V)$, is the cone over  $\bP^{N-a-1}$ of  ${\Tan}^{a-1}(C)$ (and the same holds true for 
higher Gauss maps).
\end{prop}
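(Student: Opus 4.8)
The plan is to obtain the statement as an essentially immediate specialisation of Proposition~\ref{prop:ghc} (the case $r=1$ of Griffiths--Harris) to the geometric data assembled in the paragraphs just above it. First I would set $B=\im(\gamma^s)$, which is a curve by hypothesis, so that $r=\dim B=1$ and that case applies verbatim; the $(d_s+1)$-dimensional vector space attached to the general point $y=\gamma^s(P)\in B$ is $S_y=\gen{A_0,\dotsc,A_{d_s}}=\tilde T^{(s)}_P(V)$, so in the notation of Proposition~\ref{prop:ghc} one has $d=d_s$ and $\bP_y^d=\bP_y^{d_s}$.

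The one quantity that must be pinned down before quoting the proposition is the integer $\rho$. I would argue that $\rho=1$ as follows. We are in the case $m=k-1$, so by \eqref{eq:dim} and Lemma~\ref{lem:ff} we have $d_{s+1}=d_s+1$, and by definition of the $(s+1)$-th osculating space the inclusion $\bP_y^{d_s}+\frac{\bP_y^{d_s}}{dy}\subseteq\bP_y^{d_{s+1}}$ holds. Since $\gamma^s$ is non-constant on the genuine curve $B$, the rank of its differential is at least one, so this inclusion forces $\dim\bigl(\bP_y^{d_s}+\frac{\bP_y^{d_s}}{dy}\bigr)=d_s+1$; equivalently $\dim\bigl(\bP_y^{d_s}\cap\frac{\bP_y^{d_s}}{dy}\bigr)=d_s-1$, which is precisely the hypothesis $\dim(\bP_y^{d}\cap\frac{\bP_y^{d}}{dw})=d-\rho$ of Proposition~\ref{prop:ghc} with $\rho=1$.

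With $\rho=1$ in hand, Proposition~\ref{prop:ghc} produces a \emph{single} curve $C=C_1\subset\bP^N$, a single integer $a=a_1$, and a fixed linear subspace $H$ (of dimension $d_s-a$) such that, for the general $y$, $\bP_y^{d_s}=\gen{\tilde T^{(a-1)}_y(C),H}$. Writing $\bP_y^{a-1}:=\tilde T^{(a-1)}_y(C)$ gives the asserted decomposition, and identifying $H$ with the fixed subspace in the statement. Finally, because $H$ does not vary with $y$, taking the union over $y\in B$ yields
\begin{equation*}
\Tan^s(V)=\bigcup_{y\in B}\bP_y^{d_s}=\bigcup_{y\in B}\gen{\tilde T^{(a-1)}_y(C),H},
\end{equation*}
which is exactly the join of the fixed space $H$ with $\bigcup_{y}\tilde T^{(a-1)}_y(C)=\Tan^{a-1}(C)$, i.e.\ the cone over $H$ with base $\Tan^{a-1}(C)$. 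The parenthetical assertion for higher Gauss maps then follows by running the same argument for $\gamma^{s+1}$, whose fibres also have dimension $k-1$ by Corollary~\ref{cor:coro}.

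The main obstacle, and the only genuinely non-formal step, is securing $\rho=1$: one must be sure that $d\gamma^s_P$ has rank exactly one at the general point, i.e.\ that $\im(\gamma^s)$ is honestly a curve and that $d_{s+1}=d_s+1$ rather than $d_{s+1}=d_s$; both are guaranteed here by the standing hypothesis together with Lemma~\ref{lem:ff}. Everything else is a transcription of Proposition~\ref{prop:ghc}, the remaining care being the dimension bookkeeping: the fixed subspace has dimension $d_s-a$, so each fibre $\gen{\tilde T^{(a-1)}_y(C),H}$ has dimension $(a-1)+(d_s-a)+1=d_s$ and the resulting cone has dimension $d_s+1$, consistent with the description of $\Tan^s(V)$ as a ruled variety of dimension $d_s+1$ given above.
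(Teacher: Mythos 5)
Your proposal follows essentially the same route as the paper's own argument: take $B=\im(\gamma^s)$ with $r=1$, let $S_y=\gen{A_0,\dotsc,A_{d_s}}=\tilde T^{(s)}_P(V)$, use $d_{s+1}=d_s+1$ and the inclusion $\bP_y^{d_s}+\frac{\bP_y^{d_s}}{dy}\subset\bP_y^{d_{s+1}}$ to get $\dim\bigl(\bP_y^{d_s}\cap\frac{\bP_y^{d_s}}{dy}\bigr)=d_s-1$, i.e.\ $\rho=1$, and then quote Proposition~\ref{prop:ghc} and take the union over $y\in B$. Your added details (rank exactly one from non-constancy of $\gamma^s$, and the fixed space $H$ having dimension $d_s-a$, which is what the Griffiths--Harris statement actually yields, rather than the $N-a-1$ written in the proposition) only make explicit what the paper leaves implicit, so the proof is correct and matches the paper's.
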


Clearly, one can deduce more general results on the varieties of  osculating $s$-spaces in the same vein as at the end of \cite[\S 3]{G-H}.



\end{document}